\newtheorem{lemma}{Lemma}
\newtheorem{remark}{Remark}
\newtheorem{theorem}{Theorem}
\newtheorem{assum}{Assumption}
\newtheorem{proposition}{Proposition}
\newtheorem{definition}{Definition}
\newcommand{\R}{\mathbb{R}}
\newcommand{\K}{\mathcal{K}}
\newcommand{\KL}{\mathcal{KL}}
\newcommand{\D}{\mathcal{D}}
\title{\LARGE \bf
  Robust output regulation of $2 \times 2$ hyperbolic systems part I: Control law and Input-to-State Stability\\
}
\author{Pierre-Olivier Lamare$^{1}$, Jean Auriol$^{2}$, Florent Di Meglio$^{3}$, and Ulf Jakob F. Aarsnes$^4$% <-this % stops a space
  \thanks{
    $^{1}$Pierre-Olivier Lamare is with Universit\'e Nice C\^ote d'Azur, Inria BIOCORE, BP93, 06902 Sophia-Antipolis Cedex, France.
    {\tt\small pierre-olivier.lamare@inria.fr}}%
  \thanks{
    $^{2}$Jean Auriol is with MINES ParisTech, PSL Research University, CAS - Centre automatique et syst\`emes, 60 bd St Michel 75006 Paris, France. 
    {\tt\small jean.auriol@mines-paristech.fr}}%
  \thanks{$^{3}$Florent Di Meglio is with MINES ParisTech, PSL Research University, CAS - Centre automatique et syst\`emes, 60 bd St Michel 75006 Paris, France. 
    {\tt\small florent.di\_meglio@mines-paristech.fr}}%
  \thanks{$^{4}$Ulf Jakob F. Aarsnes is International Research Institute of Stavanger (IRIS), Oslo, Norway and DrillWell - Drilling and well centre for improved recovery, Stavanger, Norway
    {\tt\small ujfa@iris.no}}
}
\begin{document}

\maketitle
\thispagestyle{empty}
\pagestyle{empty}

%%%%%%%%%%%%%%%%%%%%%%%%%%%%%%%%%%%%%%%%%%%%%%%%%%%%%%%%%%%%%%%%%%%%%%%%%%%%%%%% 
\begin{abstract}
  We consider the problem of output feedback regulation for a linear first-order hyperbolic system with collocated input and output in presence of a general class of disturbances and noise. The proposed control law is designed through a backstepping approach incorporating an integral action. To ensure robustness to delays, the controller only cancels part of the boundary reflection by means of a tunable parameter. This also enables a trade-off between disturbance and noise sensitivity. We show that the boundary condition of the obtained target system can be transformed into a Neutral Differential Equation (NDE) and that this latter system is Input-to-State Stable (ISS). This proves the boundedness of the controlled output for the target system. This extends previous works considering an integral action for this kind of system~\cite{LDM16}, and constitutes an important step towards practical implementation of such controllers. Applications and practical considerations, in particular regarding the system's sensitivity functions are derived in a companion paper.
\end{abstract}

\section{Introduction}
\label{sec:introduction}

In this paper, we solve the problem of output feedback regulation for a system composed of two linear hyperbolic PDEs with collocated boundary input and output in presence of disturbances and noise in the measurements. The proposed controller combines a backstepping approach with an integral action. The resulting feedback law is proved to be Input-to-State Stable (ISS). This paper extends the results stated in~\cite{LDM16} where uncorrupted anti-collocated measurements were considered in presence of static disturbances.

A large number of physical networks may be represented by hyperbolic systems. Among them we can cite the hydraulic networks~\cite{Bastin2011,DSBCAN08}, road traffic networks~\cite{FHS14}, oil well drilling~\cite{A13,DMBPA14} or gas pipeline networks~\cite{GDL11}. Due to the importance of such applications, a large number of results concerning their control has emerged this last decade. Among the different challenges, the disturbance rejection problem has been recently considered in~\cite{A13,AA15,D16,D17,DSBCAN08,LBL15,TK14}. In~\cite{A13,AA15}, the rejection of a perturbation affecting the uncontrolled boundary side of a $2 \times 2$ linear hyperbolic system is solved using a backstepping approach. In~\cite{LBL15}, a proportional-integral controller is introduced to ensure the stabilization of a reference trajectory. An integral action is considered in~\cite{DSBCAN08} to ensure output rejection and its effectiveness is validated on experimental data. In~\cite{TK14}, a sliding mode control approach is used to reject a boundary time-varying input disturbance. % The results related to the present paper are the results in~\cite{LBL15} for the integral control,~\cite{DMVK13,VKC11} for the control and observer by backstepping.

The main contribution of this paper is to solve the problem of output disturbance rejection for a $2 \times 2$ first-order hyperbolic system with collocated boundary input and output. % This is done by incorporating an integral action to a backstepping-based controller. It shall be point out that it is much more difficult to deal with the collocated case than with the anti-collocated (cf.~\cite{LDM16}).
Besides, the class of disturbances considered in this paper, namely bounded signals, is more general than the one proposed in~\cite{D16,D17} in which the disturbance signal is generated by an exosystem of finite dimension, or than the smooth disturbances considered in~\cite{LBL15,LDM16}. 
%They are assumed to vary in time and the measured output is subject to noise. Moreover, we assume very large classes of disturbances, namely bounded signals. Thus, it is more general than the disturbances considered in~\cite{D16,D17} in which the disturbance signal is generated by an exosystem of finite dimension, or the regular disturbances considered in~\cite{LBL15,LDM16}.

Our approach is the following. Similarly to~\cite{LDM16}, the original system is mapped to a simple target system where an integral term is added. The disturbances are incorporated into the target system. To state that the resulting target system is ISS with respect to perturbations and noise, we show that the output satisfies a Neutral Differential Equation (NDE). Using existing results on such systems, the ISS property is finally obtained.

The paper is organized as follows. The original disturbed system and the notations are introduced in Section~\ref{sec:pb_desciption}. In Section~\ref{sec:output-regulation}, we present the stabilization result: using a backstepping transformation, the original system is mapped to a target system for which the in-domain couplings are removed. The control law is then designed. The resulting closed-loop system can be rewritten as a neutral delay-equation which is proved to be ISS with respect to the noise and the disturbances. To envision practical application, an observer-controller is introduced in Section~\ref{sec:boundary-observer}. In Section~\ref{sec:feedb-outp-regul} we prove that the resulting output feedback control law still stabilizes the output. Besides, it is shown that static disturbances are completely rejected. This result has already been proved in~\cite{LDM16} in the case of an uncorrupted measurement and for anti-collocated input and output.

%%% Local Variables:
%%% mode: latex
%%% TeX-master: "ACC_I_Theoretical"
%%% End:

\section{Problem Description}
\label{sec:pb_desciption}

We consider the following system
\begin{align}
  \label{eq:perturbed_system_1} u_t(t,x) + \lambda(x)
  u_x(t,x) & = \gamma_1(x) v(t,x) + d_1(t)m_1(x) \\
  \label{eq:perturbed_system_2}
  v_t(t,x) -\mu(x) v_x(t,x) & = \gamma_2(x) u(t,x) + d_2(t)m_2(x) \, ,
	\end{align}
	under the boundary conditions
	\begin{align}
  \label{eq:perturbed_system_3} u(t,0) & = q v(t,0)
                                         + d_3(t) \\
  \label{eq:perturbed_system_4}
  v(t,1) & = \rho u(t,1) + U(t) + d_4(t) \, ,
\end{align}
where~$t \in \left[0,+\infty\right)$ is the time variable,~$x \in
\left[0,1\right]$ is the space variable,~$q\neq0$ is a constant parameter, and~$U$ is the control
input. The initial conditions~$u^0(x)=u(0,x)$ and~$v^0(x)=v(0,x)$ are assumed to be bounded and therefore in~$L^\infty((0,1);\R)$. We make the following assumption on the velocities~$\lambda$ and~$\mu$ and on the in-domain-coupling terms~$\gamma_1$ and~$\gamma_2$.
\begin{assum}
  \label{assum:function_regularity}
  The functions~$\lambda$,~$\mu:[0,1]\rightarrow \R$ are Lipschitz-continuous and satisfy~$\lambda(x)$,~$\mu(x)>0$, for all~${x\in[0,1]}$. The functions~$\gamma_1$,~$\gamma_2$ belong to~$C^1([0,1];\R)$. The product of the distal reflection $q$ with the proximal reflection $\rho$ is assumed to be strictly lower than one to ensure delay-robustness \cite{A17}.
\end{assum}
The functions~$d_1$ and $d_2$ correspond to disturbances acting on the
right-hand side of~\eqref{eq:perturbed_system_1} and
\eqref{eq:perturbed_system_2}. The locations of these distributed
disturbances are given by the unknown functions~$m_1$ and~$m_2$. The
functions~$d_3$ and~$d_4$ correspond to disturbances acting on the
right-hand side of~(\ref{eq:perturbed_system_3}) and (\ref{eq:perturbed_system_4}), respectively.  

Moreover, we assume that the measured output is also subject to an unknown noise~$n(t)$
\begin{equation}
  y_m(t) = u(t,1) + n(t) \, .  
\end{equation}
The aim of this paper is to regulate the output
\begin{equation}
  y(t) = u(t,1) \, .
\end{equation}
Let state the following assumption on the disturbances.
\begin{assum}
  \label{assum:disturbance_regularity}
  The disturbances~$d_i$,~$i=1,\dots,4$, are in~$W^{2,\infty}\left((0,\infty);\R\right)$, the noise~$n$ is assumed to be in~$L^\infty((0,\infty);\R)$, and the disturbance input locations~$m_1$ and~$m_2$ are in~$C\left([0,1];\R^+\right)$.
\end{assum}
With the two former assumptions, using the characteristics method and classical fixed point arguments we have the following result (see e.g.~\cite{B00}).
\begin{theorem}
  \label{theo:wellposedness}
  Under Assumptions~\ref{assum:function_regularity} and~\ref{assum:disturbance_regularity} system~\eqref{eq:perturbed_system_1}--\eqref{eq:perturbed_system_4} admits an unique solution in ~$C\left(\left[0,\infty\right);L^\infty\left((0,1);\R^2\right) \cap  L^1\left((0,1);\R^2\right)\right)$.
\end{theorem}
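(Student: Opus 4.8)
The plan is to establish local well-posedness by the method of characteristics combined with a contraction-mapping argument, and then to propagate the local solution to all of $[0,\infty)$ using linearity. First I would introduce the characteristic curves associated with each transport equation: for \eqref{eq:perturbed_system_1} the rightward curve $s \mapsto \chi_1(s;t,x)$ solving $\dot\chi_1 = \lambda(\chi_1)$, $\chi_1(t)=x$, and for \eqref{eq:perturbed_system_2} the leftward curve $s \mapsto \chi_2(s;t,x)$ solving $\dot\chi_2 = -\mu(\chi_2)$, $\chi_2(t)=x$. Assumption~\ref{assum:function_regularity} (Lipschitz continuity and strict positivity of $\lambda,\mu$ on the compact interval $[0,1]$) guarantees that these curves exist, are unique, and cross the spatial interval in finite time, which is bounded below by a positive constant.

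Differentiating $u$ and $v$ along their respective characteristics turns \eqref{eq:perturbed_system_1}--\eqref{eq:perturbed_system_2} into ordinary differential equations whose right-hand sides are the coupling terms $\gamma_1 v$, $\gamma_2 u$ and the forcing terms $d_1 m_1$, $d_2 m_2$. Integrating along each characteristic from its foot — located either on the initial line $\{s=0\}$ or on one of the two spatial boundaries — yields a closed system of integral equations for $(u,v)$. The initial data supply the feet lying on $\{s=0\}$, while the boundary data enter through the feet lying on $\{x=0\}$ and $\{x=1\}$: there $u(s,0)$ is replaced using \eqref{eq:perturbed_system_3} and $v(s,1)$ using \eqref{eq:perturbed_system_4}, so that the control input $U$, the boundary disturbances $d_3,d_4$ and the reflection coefficients $q,\rho$ are absorbed into the resulting integral operator.

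I would then define the solution map $\mathcal{T}$ on $C([0,T];L^\infty\cap L^1)$ sending a candidate pair to the right-hand side of these integral equations, and show it is a contraction for $T$ small enough. The key point is that the coupling contributions appear under integrals over characteristic segments whose length is controlled by $T$, so their Lipschitz constant is bounded by $C\,T$ with $C$ depending only on $\|\gamma_1\|_\infty$, $\|\gamma_2\|_\infty$, $|q|$, $|\rho|$ and the transport times; choosing $T$ small makes $CT<1$. The Banach fixed-point theorem then gives a unique local solution, and the $L^\infty$ and $L^1$ bounds follow from the assumed boundedness of $u^0,v^0$ in $L^\infty$ together with the bounds on the disturbances and noise provided by Assumption~\ref{assum:disturbance_regularity}. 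Because the system is linear, the a priori estimate grows at most exponentially and never blows up in finite time, so the local solution extends to a unique global solution by concatenating the argument on the successive intervals $[kT,(k+1)T]$, $k\in\N$.

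The main obstacle I anticipate is the bookkeeping of the boundary reflections inside the fixed-point setting: a characteristic may reach a boundary, be reflected through \eqref{eq:perturbed_system_3}--\eqref{eq:perturbed_system_4} into the other component, and this coupling must be kept contractive uniformly, which is precisely where the condition $q\rho<1$ of Assumption~\ref{assum:function_regularity} and the finiteness of the transport delays are used. A secondary technical point is the continuity in time of the trajectory: since $u^0,v^0$ are only in $L^\infty$, continuity is obtained in the $L^1$ topology while the $L^\infty$ bound is merely preserved, the translation-continuity of $L^1$ functions handling the transport of possible discontinuities along the characteristics.
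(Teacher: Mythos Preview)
Your approach---characteristics plus a Banach fixed-point argument on a short time interval, followed by global continuation using linearity---is precisely what the paper invokes: the paper does not actually give a proof of this theorem but only states that it follows from ``the characteristics method and classical fixed point arguments'' and cites~\cite{B00}.

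One correction is warranted, however. The hypothesis $|q\rho|<1$ in Assumption~\ref{assum:function_regularity} is a stability/delay-robustness condition and is \emph{not} needed for well-posedness. If you choose $T$ strictly smaller than the minimum of the two transport times across $[0,1]$, then every backward characteristic reaches either the initial line $\{s=0\}$ directly or hits exactly one boundary and then the initial line; the product $q\rho$ therefore never appears in the fixed-point estimate on $[0,T]$, and the contraction constant comes solely from the in-domain couplings (an $O(T)$ term). Global existence then follows from linearity by iterating on successive intervals: the solution may grow exponentially if $|q\rho|\geq 1$, but it cannot blow up in finite time. So you should drop the claim that the boundary reflections must be ``kept contractive uniformly'' via $q\rho<1$.
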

We denote by~$E'$ the set of bounded functions~$y : [0,1] \rightarrow \R^2$. Therefore,~$E'$ belongs to~$L^\infty((0,1);\R^2)$ and let $E:=E'\times \R$. The notation~$\left\lVert y \right\rVert_{E'}$ refers to~$\left\lVert y \right\rVert_{L^\infty((0,1);\R^2)}$ and for $z=\left(z_1,z_2,z_3\right)\in E' \times \R$, $\left\lVert z \right\rVert_E = \left\lVert \left(z_1,z_2\right) \right\rVert_{E'} + \left|z_3\right|$.

%%% Local Variables:
%%% mode: latex
%%% TeX-master: "ACC_I_Theoretical"
%%% End:

\section{Output Regulation}
\label{sec:output-regulation}

To achieve output regulation we choose to design a controller combining a backstepping controller $U_{BS}$ and an integrator term $k_I \eta$, namely
\begin{align}
  \label{eq:Controller}
  U(t) & = U_{BS}(t) +  k_I \eta(t) \\
  \label{eq:eta_dot}
  \dot{\eta} (t) & = y_m(t) \, .  
\end{align}
In what follows, we design~$U_{BS}$ and~$k_I$ to perform output
regulation. We make the assumption of full-state measurement. In the next section, using a backstepping transformation, we map the original system~\mbox{\eqref{eq:perturbed_system_1}--\eqref{eq:perturbed_system_4}} to a simple target system from which the in-domain couplings have been removed. 

\subsection{Backstepping Transformation and Target System}
Let us consider  the backstepping transformation $\Gamma_1[(u,v)(t)](\cdot) =
\alpha(t,\cdot)$ and ${\Gamma_2[(u,v)(t)](\cdot) = \beta(t,\cdot)}$ defined by
\begin{align}
   \alpha(t,x) & = u(t,x) - \int_0^x  K^{uu}(x,\xi)u(t,\xi) d\xi \nonumber \\
               & \hphantom{=} - \int_0^xK^{uv}(x,\xi)v(t,\xi)d\xi \label{transfo_11}\\
  \beta(t,x) & = v(t,x) - \int_0^x K^{vu}(x,\xi)u(t,\xi)d\xi \nonumber \\
               & \hphantom{=} -\int_0^xK^{vv}(x,\xi)v(t,\xi)d\xi \, , \label{transfo_12}
               % \gamma(t) & = \left(1+a\right)\eta(t) - \int_0^1 \left(\ell_1(\xi)u(t,\xi) +\ell_2(\xi)v(t,\xi)\right) d\xi \, .
\end{align}
where the kernels~$K^{uu}, K^{uv}, K^{vu}$, and~$K^{vv}$ are defined in~\cite{CVKB13} in~$L^\infty(\mathcal{T})$, where~$\mathcal{T}=\{(x,\xi)\in [0,1]^2 |\quad \xi \leq x\}$. %by the following set of hyperbolic PDEs:
%\begin{align}
%\lambda(x)K^{uu}_x+\lambda(\xi)K^{uu}_{\xi} &=-\gamma_2(\xi)K^{uv}-\lambda'(\xi)K^{uv} \label{kerneK^{uu}}\\
%  \lambda(x)K^{uv}_x-\mu(\xi)K^{uv}_{\xi} &=-\gamma_1(\xi)K^{uu}+\mu'(\xi)K^{uu} \label{kerneK^{uv}}\\
%  \mu(x)K^{vu}_x-\lambda(\xi)K^{vu}_{\xi} & =\gamma_2(\xi)K^{vv}+\lambda'(\xi)K^{vv} \label{kerneK^{vu}}\\
%  \mu(x)K^{vv}_x+\mu(\xi)K^{vv}_{\xi} & = \gamma_1(\xi)K^{vu}-\mu'(\xi)K^{vu} \, , \label{kerneK^{vv}}
%\end{align}
%with the following set of boundary conditions:
%\begin{align}
%  K^{vu}(x,x) & =-\frac{\gamma_2(x)}{\lambda(x)+\mu(x)} \\
%  K^{vv}(x,0) & =\frac{\lambda(0)q}{\mu(0)}K^{vu}(x,0)\\
%  K^{uv}(x,x) & =\frac{\gamma_1(x)}{\lambda(x)+\mu(x)} \\
%  K^{uv}(x,0) & =\frac{\lambda(0)q}{\mu(0)}K^{uu}(x,0) \, . \label{K_bond}
%\end{align}
We recall the following lemma
\begin{lemma}[~\cite{CVKB13}]
 The transformation~\eqref{transfo_11}--\eqref{transfo_12} is invertible and the inverse transformation can be expressed as follow
\begin{align}
  u(t,x)& = \alpha(t,x) + \int^x_0 L^{\alpha\alpha}(x,\xi)\alpha(t,\xi)d\xi \nonumber \\
        & \hphantom{=} + \int^x_0L^{\alpha\beta}(x,\xi)\beta(t,\xi)d\xi\label{back_inv1} \\
  v(t,x)& =\beta(t,x)  + \int^x_0 L^{\beta\alpha}(x,\xi)\alpha(t,\xi)d\xi \nonumber \\
        & \hphantom{=} + \int^x_0 L^{\beta\beta}(x,\xi)\beta(t,\xi)d\xi \, , \label{back_inv2}
\end{align}
where~$L^{\alpha\alpha}$,~$L^{\alpha\beta}$,~$L^{\beta\alpha}$, and~$L^{\beta\beta}$ belong to~$L^{\infty}(\mathcal{T})$.
\end{lemma}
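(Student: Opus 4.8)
The plan is to recognize that the direct transformation \eqref{transfo_11}--\eqref{transfo_12} is a Volterra integral operator of the second kind, i.e.\ of the form identity-minus-Volterra, and to invoke the classical fact that such operators are boundedly invertible with an inverse of the same type. First I would collect the four kernels into a single $2\times 2$ matrix kernel
\begin{equation}
  K(x,\xi) = \begin{pmatrix} K^{uu}(x,\xi) & K^{uv}(x,\xi) \\ K^{vu}(x,\xi) & K^{vv}(x,\xi) \end{pmatrix},
\end{equation}
and write $w=(u,v)^\top$, $\psi=(\alpha,\beta)^\top$, so that \eqref{transfo_11}--\eqref{transfo_12} becomes $\psi(t,x)= w(t,x)-\int_0^{x}K(x,\xi)w(t,\xi)\,d\xi$. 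I would then look for the inverse in the mirrored form $w(t,x)=\psi(t,x)+\int_0^{x}L(x,\xi)\psi(t,\xi)\,d\xi$, where $L$ gathers the sought kernels $L^{\alpha\alpha},L^{\alpha\beta},L^{\beta\alpha},L^{\beta\beta}$.

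Next I would substitute this candidate inverse into the direct transformation, cancel the leading term $\psi(t,x)$, and use Fubini's theorem to exchange the order of integration in the resulting double integral over the triangle $\{0\le s\le\xi\le x\}$. Matching the kernels acting on $\psi$ then yields, for each fixed $\xi$ and viewed as a function of $x$, the Volterra integral equation of the second kind
\begin{equation}
  \label{eq:kernel_for_L}
  L(x,\xi) = K(x,\xi) + \int_\xi^x K(x,s)\,L(s,\xi)\,ds .
\end{equation}
Thus the whole problem reduces to solving \eqref{eq:kernel_for_L} in $L^\infty(\mathcal{T})$.

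Finally I would solve \eqref{eq:kernel_for_L} by successive approximations, setting $L_0=K$ and $L_{n+1}(x,\xi)=\int_\xi^x K(x,s)L_n(s,\xi)\,ds$. Since $K\in L^\infty(\mathcal{T})$, writing $M=\lVert K\rVert_{L^\infty(\mathcal{T})}$ for a submultiplicative matrix norm, the Volterra (triangular) structure produces by induction the factorial bound $\lVert L_n(x,\xi)\rVert \le M^{n+1}(x-\xi)^n/n!$; because $x-\xi\le 1$ on $\mathcal{T}$ the series $L=\sum_{n\ge 0}L_n$ converges absolutely and uniformly, with $\lVert L\rVert_{L^\infty(\mathcal{T})}\le Me^{M}$. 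This shows $L\in L^\infty(\mathcal{T})$, so its four entries are exactly the claimed kernels, and reading off the two components of $w=\psi+\int_0^{x} L\,\psi$ gives \eqref{back_inv1}--\eqref{back_inv2}. A symmetric computation, substituting the direct transformation into the candidate inverse, confirms that this same $L$ is also a left inverse, so the transformation is genuinely invertible.

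The main obstacle is purely technical: one must control the iterated kernels in $L^\infty$ rather than in the more comfortable continuous setting, since the kernels $K^{uu},\dots,K^{vv}$ are only assumed bounded. This is precisely what the factorial decay coming from the shrinking domain of integration in the Volterra structure secures, and it requires no smallness assumption on $K$. I would also take care to justify the application of Fubini's theorem for merely bounded (measurable) kernels and to verify that the manipulations are valid for every admissible $\psi$, so that the pointwise kernel identity \eqref{eq:kernel_for_L} legitimately follows from the integral identity.
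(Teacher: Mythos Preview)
Your argument is correct and is the standard route to invert a Volterra operator of the second kind: pass to the resolvent-kernel equation \eqref{eq:kernel_for_L} and solve it by the Neumann series with the factorial bound coming from the triangular domain. The paper, however, does not prove this lemma at all; it simply imports the result from~\cite{CVKB13}, so there is no in-paper proof to compare against. What you have written is essentially the proof that the cited reference (and the broader backstepping literature) relies on, so your proposal is a faithful self-contained substitute for the citation.
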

The transformation \eqref{transfo_11}-\eqref{transfo_12} maps the original system~\mbox{\eqref{eq:perturbed_system_1}--\eqref{eq:perturbed_system_4}} to the following target system
\begin{align}
  & \alpha_t + \lambda(x)\alpha_x = \D_1(t)M_1(x) \label{target_1} \\
  & \beta_t - \mu(x)\beta_x = \D_2(t)M_2(x) \, ,
\end{align}
with the boundary conditions
\begin{align}
  \alpha(t,0) & = q \beta(t,0) + d_3(t) \\
  \beta(t,1) & = \rho\int^1\left(L^{\alpha\alpha}(1,\xi)\alpha(t,\xi)
               + L^{\alpha\beta}(1,\xi)\beta(t,\xi)\right)d\xi \nonumber \\
              & \hphantom{=} -  \int_0^1 \left(L^{\beta\alpha}(1,\xi)\alpha(t,\xi)+L^{\beta\beta}(1,\xi)\beta(t,\xi)\right)d\xi \nonumber \\
  & \hphantom{=} +\rho \alpha(t,1) + U(t)+ k_I \eta(t) + d_4(t) \, , \label{target_BC}
\end{align}
where
\begin{align}
\dot{\eta}(t)& = \alpha(t,1)+n(t) \nonumber \\
             & \hphantom{=} + \int^1_0(L^{\alpha\alpha}(1,\xi)\alpha(t,\xi)+L^{\alpha\beta}(1,\xi)\beta(t,\xi))d\xi \, , \label{eq_eta}
\end{align}
with
\begin{align}
  \D_1(t)M_1(x) & = d_1(t)m_1(x)-K^{uu}(x,0)\lambda(0)d_3(t)  \nonumber \\
                & \hphantom{=} -\int_0^x K^{uu}(x,\xi)d_1(t)m_1(\xi)d\xi \nonumber \\
                & \hphantom{=} - \int_0^x K^{uv}(x,\xi)d_2(t)m_2(\xi) d\xi
\end{align}
\begin{align}
\D_2(t)M_2(x)& = d_2(t)m_2(x)-K^{vu}(x,0)\lambda(0)d_3(t) \nonumber \\
                & \hphantom{=} -\int_0^x K^{vu}(x,\xi)d_1(t)m_1(\xi)d\xi \nonumber \\
                & \hphantom{=} -\int_0^x K^{vv}(x,\xi)d_2(t)m_2(\xi)d\xi \, .
\end{align}
Note that if~$\dot{\eta}$ converges to zero and if~$n(t)=0$, then~$u(t,1)$ converges to 0, due to \eqref{back_inv1}. Unconventionally, we define the control law $U_{BS}$ in terms of the variables of the target system~$\alpha$ and $\beta$ as
\begin{align}
  U_{BS}(t)& = -\tilde{\rho} \alpha(t,1) \nonumber \\
	& \hphantom{=}-\rho \int^1_0\left(L^{\alpha\alpha}(1,\xi)\alpha(t,\xi)+L^{\alpha\beta}(1,\xi)\beta(t,\xi)\right)d\xi \nonumber \\
      & \hphantom{=} +  \int_0^1 \left(L^{\beta\alpha}(1,\xi)\alpha(t,\xi)+L^{\beta\beta}(1,\xi)\beta(t,\xi)\right)d\xi  \nonumber  \\
      &\hphantom{=} -k_I\int_0^1 \left(l_1(\xi) \alpha(t,\xi) + l_2(\xi) \beta(t,\xi)\right)d\xi, \label{eq:U_2}
\end{align}
where the tuning parameter $\tilde{\rho}$ satisfies 
\begin{align}
|\rho q|+|\tilde{\rho} q|<1,
\end{align}
which is well defined since $\rho q<1$. The functions~$l_1$ and~$l_2$ on the interval~$[0,1]$ are defined as the solution of the system
\begin{align}
  & (l_1(x)\lambda(x))'=L^{\alpha\alpha}(1,x) \label{eq_l1}\\
  & (l_2(x)\mu(x))'=-L^{\alpha\beta}(1,x) \,  \label{eq_l2},
\end{align}
with the boundary conditions
\begin{align}
  l_2(1)=0 \, , \qquad l_1(0)=\frac{\mu(0)}{q\lambda(0)}l_2(0). \label{bound_l}
\end{align}
This control law is composed of two parts that have two distinct effects. The first one (made of the three first lines) corresponds to the control law derived in~\cite{A17}. It would stabilize the original system in the absence of disturbances and of the integral term $k_i\eta(t)$. Note that the purpose of the term $-\tilde{\rho}\alpha(t,1)$ is to avoid a complete cancellation of the proximal reflexion and thus to guarantee some delay-robustness~\cite{A17}. The second term of the control law (made of the last line of~\eqref{eq:U_2}) is related to the integral action. In order to ensure the existence of a solution to~\eqref{eq_l1}-\eqref{bound_l}, we make the following assumption
\begin{assum}
  \label{assum:condition_neq_L}
\begin{align}
1+\int_0^1 L^{\alpha \alpha}(1,\xi)d\xi +\frac{1}{q}\int_0^1 L^{\alpha \beta}(1,\xi)d\xi \ne 0\label{funda_eq} \, .
\end{align}
\end{assum}
Unfortunately, this assumption has no physical interpretation.
Using equation~\eqref{back_inv1}-\eqref{back_inv2}, one can write the control law~\eqref{eq:U_2} in terms of the original variables $u$ and $v$.
%Using~\eqref{transfo_11},~\eqref{transfo_12},~\eqref{back_inv1}, and~\eqref{back_inv2} the control law in~\eqref{eq:U_2} %may be written in the state coordinate~$u$ and~$v$ as
%\begin{align}
 % U(t) & = -\tilde{\rho}u(t,1) - \left(\rho-\tilde{\rho}\right)\int_0^1K^{uu}(1,\xi)u(t,\xi)d\xi \nonumber \\
 %      & \hphantom{=} - \left(\rho-\tilde{\rho}\right)\int_0^1K^{uv}(1,\xi)v(t,\xi)d\xi \nonumber \\
 %      & \hphantom{=} + \int_0^1 K^{vu}(1,\xi)u(t,\xi)d\xi+ \int_0^1 K^{vv}(1,\xi)v(t,\xi)d\xi \nonumber \\
 %      & \hphantom{=} -k_I\int_0^1 l_1(\xi)\Gamma_1[(u,v)(t)](\xi)d\xi \nonumber \\
 %      & \hphantom{=} - k_I \int_0^1 l_2(\xi)\Gamma_2[(u,v)(t)](\xi)d\xi  \, . \label{eq:U}
%\end{align}
In the next sections, we prove that this control law ensures output regulation. We first investigate a pseudo-steady state of the closed loop system. 

\subsection{Pseudo-steady state}

In this section, we consider a pseudo-steady state of the target system~\eqref{target_1}--\eqref{eq_eta} in presence of the control law~\eqref{eq:U_2}, that corresponds to~$u^{ss}(t,1)=\alpha(t,1)+\int_0^1 L^{\alpha \alpha}(1,\xi) \alpha^{ss}(t,\xi)d\xi +\int_0^1 L^{\alpha \beta}(1,\xi) \beta^{ss}(t,\xi) d\xi=0$. We then derive the error system, i.e the difference between the real state and this pseudo-steady state. This pseudo steady-state is defined by
\begin{align}
  \frac{d}{dx}\begin{pmatrix} \alpha^{ss}(t,x) \\ \beta^{ss}(t,x) \end{pmatrix}= \begin{pmatrix} \frac{\D_1(t)M_1(x)}{\lambda(x)} \\ -\frac{\D_2(t)M_2(x)}{\mu(x)} \end{pmatrix} \label{steady_eq}
\end{align}
along with the initial conditions
\begin{align}
  \beta^{ss}(t,0) & =\frac{1}{q}(\alpha^{ss}(t,0)-d_3(t))  \label{steady_boun1}\\
  \alpha^{ss}(t,1) & =-\int_0^1 L^{\alpha \alpha}(1,\xi) \alpha^{ss}(t,\xi)d\xi \nonumber \\
  & \hphantom{=} -\int_0^1 L^{\alpha \beta}(1,\xi) \beta^{ss}(t,\xi) d\xi \, .\label{steady_boun2}
\end{align}
We have the following lemma regarding the existence of a solution to
the ODE~\eqref{steady_eq},~\eqref{steady_boun1}, and~\eqref{steady_boun2}.
\begin{lemma}
  \label{lem:existence_ss}
If equation~\eqref{funda_eq} holds, the ordinary differential
equation~\eqref{steady_eq} with boundary
conditions~\eqref{steady_boun1} and~\eqref{steady_boun2} has a unique
solution. Moreover, for every $x \in [0,1]$ one has
$\alpha^{ss}(\cdot,x)$ and $\beta^{ss}(\cdot,x)$ in~$W^{2,\infty}\left((0,\infty);\R\right)$.
\end{lemma}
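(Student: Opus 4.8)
The plan is to exploit the crucial feature that the right-hand side of~\eqref{steady_eq} does not depend on the unknowns $\alpha^{ss}$ and $\beta^{ss}$: for each fixed $t$ the system is not a genuine differential equation in the state but a direct quadrature in $x$. First I would integrate~\eqref{steady_eq} from $0$ to $x$, which gives
\begin{align}
\alpha^{ss}(t,x) &= \alpha^{ss}(t,0) + \int_0^x \frac{\D_1(t)M_1(\sigma)}{\lambda(\sigma)}\,d\sigma, \nonumber \\
\beta^{ss}(t,x) &= \beta^{ss}(t,0) - \int_0^x \frac{\D_2(t)M_2(\sigma)}{\mu(\sigma)}\,d\sigma. \nonumber
\end{align}
Since $\lambda,\mu$ are bounded away from zero and, for each fixed $t$, the functions $\D_1(t)M_1(\cdot)$ and $\D_2(t)M_2(\cdot)$ are bounded in $x$ (being built from the continuous data $m_1,m_2$, the $L^\infty$ kernels, and the disturbance values $d_i(t)$), these integrals are well defined. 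Thus the whole profile is parametrized by the two scalars $\alpha^{ss}(t,0)$ and $\beta^{ss}(t,0)$, and it remains only to fix them using the two boundary conditions.

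Next I would use~\eqref{steady_boun1} to eliminate $\beta^{ss}(t,0) = \frac{1}{q}(\alpha^{ss}(t,0)-d_3(t))$, leaving $\alpha^{ss}(t,0)$ as the single remaining unknown. Substituting the two integrated expressions into the nonlocal condition~\eqref{steady_boun2} and collecting the coefficient of $\alpha^{ss}(t,0)$ yields a scalar affine equation of the form
\begin{align}
\Big(1+\int_0^1 L^{\alpha\alpha}(1,\xi)d\xi + \tfrac{1}{q}\int_0^1 L^{\alpha\beta}(1,\xi)d\xi\Big)\alpha^{ss}(t,0) = R(t), \nonumber
\end{align}
where $R(t)$ gathers all the remaining terms, namely the quadratures against $L^{\alpha\alpha}(1,\cdot)$ and $L^{\alpha\beta}(1,\cdot)$ and the contribution of $d_3(t)$. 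The multiplicative factor on the left is exactly the quantity appearing in~\eqref{funda_eq}, so under Assumption~\ref{assum:condition_neq_L} it is nonzero and $\alpha^{ss}(t,0)$ is uniquely determined; $\beta^{ss}(t,0)$, and then the full profile, follow. This establishes both existence and uniqueness. I expect recognizing that the solvability condition is precisely~\eqref{funda_eq} to be the only conceptual point; the rest is bookkeeping.

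Finally, for the regularity in $t$, I would observe that each $\D_i(t)M_i(x)$ is, by its very definition, a finite sum of terms of the form $[\text{function of }x]\cdot d_j(t)$ with $j\in\{1,2,3\}$. Hence every quadrature above, and therefore the right-hand side $R(t)$, is a linear combination of $d_1(t),d_2(t),d_3(t)$ with coefficients that are bounded functions of $x$ alone. Solving the scalar equation then expresses $\alpha^{ss}(t,0)$, and consequently $\alpha^{ss}(t,x)$ and $\beta^{ss}(t,x)$ for each fixed $x$, as a linear combination of $d_1,d_2,d_3$ with constant-in-$t$ coefficients. Since $W^{2,\infty}\left((0,\infty);\R\right)$ is a vector space and $d_1,d_2,d_3$ lie in it by Assumption~\ref{assum:disturbance_regularity}, the maps $t\mapsto\alpha^{ss}(t,x)$ and $t\mapsto\beta^{ss}(t,x)$ inherit the $W^{2,\infty}$ regularity, which is the claimed conclusion.
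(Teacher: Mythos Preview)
Your proposal is correct and follows essentially the same route as the paper: integrate~\eqref{steady_eq} directly (the right-hand side is state-independent), then determine the remaining free constants from the two boundary conditions, the solvability reducing to the nonvanishing of the quantity in~\eqref{funda_eq}. The paper packages the linear system as a $2\times 2$ matrix in the unknowns $(\alpha^{ss}(t,1),\beta^{ss}(t,0))$, whereas you eliminate $\beta^{ss}(t,0)$ via~\eqref{steady_boun1} and solve a single scalar equation for $\alpha^{ss}(t,0)$; these are equivalent bookkeeping choices, and your explicit $W^{2,\infty}$ argument (linearity in $d_1,d_2,d_3$ with $t$-independent coefficients) is in fact more detailed than what the paper spells out.
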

\begin{proof}
Let us define the matrix~$A_1$ by 
\begin{align}
A_1=\begin{pmatrix}
	1+\int_0^1 L^{\alpha \alpha}(1,\xi)d\xi & \int_0^1 L^{\alpha \beta}(1,\xi)d\xi \\ -\frac{1}{q} & 1
\end{pmatrix}.
\end{align}
Due to~\eqref{funda_eq}, this matrix is invertible. We then define~${a = \begin{pmatrix}
	a_1 & a_2
\end{pmatrix}^\top}$ by $a  = A_1^{-1} b$
%\begin{align}
% \, ,\
%\end{align}
with~$b = \begin{pmatrix}b_1 & b_2\end{pmatrix}^\top$ where
\begin{align}
  b_1 & = \int_0^1 L^{\alpha \alpha}(1,\xi) \int_\xi^1 \frac{\D_1(t)M_1(\nu)}{\lambda(\nu)}d\nu d\xi \nonumber \\
      & \hphantom{=} + \int_0^1L^{\beta \alpha}(1,\xi) \int_0^\xi \frac{\D_2(t)M_2(\nu)}{\mu(\nu)}d\nu d\xi \\
  b_2 & = -\frac{d_3(t)}{q}-
        \int_0^1\frac{\D_1(t)M_1(\xi)}{q\lambda(\xi)}d\xi \, .
\end{align}
One can thencheck that the function
\begin{align}
\begin{pmatrix}\alpha^{ss}(t,x)  \\ \beta^{ss}(t,x) \end{pmatrix}=\begin{pmatrix}
	a_1-\int_x^1 \frac{\D_1(t)M_1(\xi)}{\lambda(\xi)}d\xi \\ a_2 -\int_0^x \frac{\D_2(t)M_2(\xi)}{\mu(\xi)}d\xi
\end{pmatrix},
\end{align}
is solution of~\eqref{steady_eq} with the boundary
conditions~\eqref{steady_boun1} and \eqref{steady_boun2}. This concludes the proof of Lemma~\ref{lem:existence_ss}.
\end{proof}
Let us state
\begin{align}
\label{eq:eta_ss}
\eta^{ss}(t)& = \frac{\beta^{ss}(t,1)-(\rho-\tilde{\rho})\alpha^{ss}(t,1)-d_4(t)}{k_I} \nonumber \\
  & \hphantom{=} + \int_0^1 \left(l_1(\xi) \alpha^{ss}(t,\xi) + l_2(\xi) \beta^{ss}(t,\xi)\right)d\xi \, .
\end{align}
By defining the error variables $\bar{\alpha} =\alpha -\alpha^{ss}$, $\bar{\beta}=\beta -\beta^{ss}$, and $\bar{\eta}=\eta -\eta^{ss}$, one gets the following system
\begin{align}
\bar{\alpha}_t + \lambda(x) \bar{\alpha}_x=-\alpha_t^{ss} \label{eq_alpha_bar} \\
\bar{\beta}_t -\mu(x) \bar{\beta}_x=-\beta_t^{ss} \, , \label{eq_beta_bar}
\end{align}
with the boundary conditions 
\begin{align}
\bar{\alpha}(t,0)&=q\bar{\beta}(t,0) \label{bound_alpha_bar}\\
\bar{\beta}(t,1) &=\left(\rho-\tilde{\rho}\right)\bar{\alpha}(t,1)+k_I\bar{\eta}(t)\nonumber \\
  & \hphantom{=} -k_I\int_0^1 \left(l_1(\xi) \bar{\alpha}(t,\xi)+l_2(\xi) \bar{\beta}(t,\xi)\right)d\xi \, .
\end{align}
Noticing that~$\alpha^{ss}(t,1)=-\int_0^1 L^{\alpha\alpha}(1,\xi)\alpha^{ss}(t,\xi)d\xi-\int_0^xL^{\alpha\beta}(1,\xi)\beta^{ss}(t,\xi)d\xi$, we also have that 
\begin{align}
\dot{\bar{\eta}}(t) & =\int^1_0 \left(L^{\alpha\alpha}(1,\xi)\bar{\alpha}(t,\xi)+L^{\alpha\beta}(1,\xi)\bar{\beta}(t,\xi)\right)d\xi \nonumber \\
  & \hphantom{=}+ \bar{\alpha}(t,1)+n(t) -\dot{\eta}^{ss}(t) \, \label{eta_bar_eq}.
\end{align}

\subsection{Stability Analysis}
In this section, we analyze the stability properties of system~\eqref{eq_alpha_bar}--\eqref{eta_bar_eq}. More precisely, we derive conditions on $k_I$ that ensure the Input-to-State Stability of system~\eqref{eq_alpha_bar}--\eqref{eta_bar_eq}. The proof will be done in three steps. First, using a simple transformation, we rewrite the system~\eqref{eq_alpha_bar}--\eqref{eta_bar_eq} as a neutral-delay equation (NDE). We then recall some conditions that guarantee the stability of this NDE in the absence of disturbances. Finally, we prove that these conditions imply the Input-to-State Stability.  
Let us consider the inversible transformation
\begin{align}
\gamma(t)=\bar{\eta}(t)-\int_0^1 \left( l_1(\xi) \bar{\alpha}(t,\xi)+l_2(\xi) \bar{\beta}(t,\xi)\right) d\xi.
\end{align}
System~\eqref{eq_alpha_bar}--\eqref{eta_bar_eq} rewrites 
\begin{align}
&\bar{\beta}(t,1)=(\rho-\tilde{\rho})\bar{\alpha}(t,1)+k_I\gamma(t) \label{bound_beta_bar}\\
&\dot{\gamma}(t)=\left(1+l_1(1)\lambda(1)\right) \bar{\alpha}(t,1) +n(t) -\dot{\eta}^{ss}(t). \label{bound_gamma}
\end{align}
Using~\eqref{eq_l1} and~\eqref{bound_l}, we have
\begin{align}
1+l_1(1)\lambda(1)% &=1+l_1(0)\lambda(0)+\int_0^1 L^{\alpha \alpha}(1,\xi) d\xi \nonumber \\
                  &=1+l_2(0)\frac{\mu(0)}{q}+\int_0^1 L^{\alpha \alpha}(1,\xi) d\xi \nonumber \\
                  &=1+\frac{1}{q}\int_0^1 L^{\alpha \beta}(1,\xi) d\xi \nonumber \\
                  & \hphantom{=} +\int_0^1 L^{\alpha \alpha}(1,\xi) d\xi.
\end{align}
Thus, due to Assumption~\ref{assum:condition_neq_L},~$1+l_1(1) \lambda(1) \ne 0$. %System~\eqref{eq_alpha_bar},~\eqref{eq_beta_bar} along with the boundary conditions~\mbox{\eqref{bound_alpha_bar}--\eqref{bound_beta_bar}} can be rewritten as
%\begin{align}
%  \label{eq:NDE_1}
%  \bar{\alpha}(t,1) & =(\rho-\tilde{\rho})q\bar{\alpha}\left(t-\int_0^1 \left(\frac{1}{\lambda(\xi)}+\frac{1}{\mu(\xi)}\right) d\xi,1\right) \nonumber \\
%                    & \hphantom{=} +k_Iq\gamma\left(t-\int_0^1 \left(\frac{1}{\lambda(\xi)}+\frac{1}{\mu(\xi)} \right)d\xi\right) \, .
%\end{align}
In the sequel we denote by~$\phi_1(x)$ and~$\phi_2(x)$ the following functions
\begin{equation}
  \phi_1(x) = \int_0^x \frac{1}{\lambda(\xi)}d\xi \, , \quad \phi_2(x) = \int_0^x \frac{1}{\mu(\xi)}d\xi \, ,
\end{equation}
and by~$\tau_1$,~$\tau_2$, and~$\tau$ the following transport times
\begin{align}
  \tau_1  =  \phi_1(1),~\tau_2  = \phi_2(1),~\tau  = \tau_1 + \tau_2 \, . \label{eq_tau}
\end{align}
Using the characteristics method, it is straightforward to show that for all~$t \geq \tau$,
\begin{align}
  \overline{\alpha}(t,1) & = \overline{\alpha}\left(t -\tau_1,0\right) \nonumber \\
  & \hphantom{=} - \int_0^1 \frac{1}{\lambda(\xi)} \alpha_t^{ss} \left(\xi,t - \int_\xi^1 \frac{1}{\lambda(\zeta)}d\zeta\right) d\xi \label{eq:alpha_bar_1} \\
  \overline{\beta}(t,0) & = \overline{\beta}\left(t- \tau_2,1\right) - \int_0^1 \frac{\beta_t^{ss}\left(\xi,t - \phi_2(\xi)\right)}{\mu(\xi)} d\xi \, . \label{eq:beta_bar_0}
\end{align}
Combining these expression with the boundary conditions~\eqref{bound_alpha_bar} and~\eqref{bound_beta_bar}, we get for all~$t\geq \tau$,
\begin{align}
  \overline{\alpha}(t,1) & = q \overline{\beta}\left(t-\tau,1\right) \nonumber \\
  & \hphantom{=} - q \int_0^1 \frac{1}{\mu(\xi)} \beta_t^{ss}\left(\xi, t - \tau_1 -\phi_2(\xi)\right)d\xi \nonumber \\
                         & \hphantom{=} - \int_0^1
                           \frac{1}{\lambda(\xi)} \alpha_t^{ss}
                           \left(\xi,t - \int_\xi^1
                           \frac{1}{\lambda(\zeta)}d\zeta\right) d\xi \label{eq:alpha_1_equality}
\end{align}
Using again boundary condition~\eqref{bound_beta_bar},
relationship~\eqref{eq:alpha_1_equality} becomes
\begin{align}
  \overline{\alpha}(t,1) & = (\rho -\tilde{\rho})q\overline{\alpha}(t-\tau,1)+k_Iq\gamma(t-\tau) \nonumber \\
  & \hphantom{=} - q \int_0^1 \frac{1}{\mu(\xi)} \beta_t^{ss}\left(\xi, t - \tau_1 - \phi_2(\xi)\right)d\xi \nonumber \\
                         & \hphantom{=} - \int_0^1
                           \frac{1}{\lambda(\xi)} \alpha_t^{ss}
                           \left(\xi,t - \int_\xi^1
                           \frac{1}{\lambda(\zeta)}d\zeta\right) d\xi
                           \, . \label{eq:alpha_bar_1_2}
\end{align}
% Thus, it follows
% \begin{align}
%   \label{eq:NDE_perturbed}
%   \dot{\overline{\alpha}}(t,1)  & = \left(\rho - \tilde{\rho}\right)
%   q\dot{\overline{\alpha}}(t-\tau,1)
%   +k_Iq\left(1+l_1(1)\lambda(1)\right)\nonumber \\
%   & \hphantom{=} \times \overline{\alpha}(t-\tau,1) +
%   K(t) \, ,
% \end{align}
% for all~$t \geq \tau$, where
% \begin{align}
%   K(t) & =  (n(t-\tau) - \dot{\eta}^{ss}(t-\tau))k_Iq \nonumber \\
%   & \hphantom{=} - q
%          \int_0^1 \frac{1}{\mu(\xi)} \beta_{tt}^{ss}\left(\xi, t -
%          \tau_1 - \int_0^\xi
%          \frac{1}{\mu(\zeta)}d\zeta\right)d\xi \nonumber \\
%   & \hphantom{=} - \int_0^1
%                            \frac{1}{\lambda(\xi)} \alpha_{tt}^{ss}
%                            \left(\xi,t - \int_\xi^1
%                            \frac{1}{\lambda(\zeta)}d\zeta\right) d\xi
%                          \, .
% \end{align}
% %Two different situations shall be consider. First, let us assume that
%  %~$\tilde{\rho}
%   %\neq \rho$. In this case, 
% 	System~\eqref{eq:NDE_perturbed} is a Neutral Differential Equation (NDE) when $\rho \neq \tilde{\rho}$. 
By differentiating~\eqref{eq:alpha_bar_1_2} with respect to time, one has
\begin{align}
  \label{eq:NDE_perturbed}
  \dot{\overline{\alpha}}(t,1)  & = \left(\rho - \tilde{\rho}\right)
  q\dot{\overline{\alpha}}(t-\tau,1)
  +k_Iq\left(1+l_1(1)\lambda(1)\right) \nonumber \\
  & \hphantom{=} \times \overline{\alpha}(t-\tau,1) +
  K(t) \, ,
\end{align}
where
\begin{align}
  K(t) & =  k_Iq(n(t-\tau) - \dot{\eta}^{ss}(t-\tau)) \nonumber \\
  & \hphantom{=} - q
         \int_0^1 \frac{1}{\mu(\xi)} \beta_{tt}^{ss}\left(\xi, t -
         \tau_1 - \phi_2(\xi)\right)d\xi \nonumber \\
  & \hphantom{=} - \int_0^1
                           \frac{1}{\lambda(\xi)} \alpha_{tt}^{ss}
                           \left(\xi,t - \int_\xi^1
                           \frac{1}{\lambda(\zeta)}d\zeta\right) d\xi
                         \, .
\end{align}
Let us denote $k_1 = \left(\rho-\tilde{\rho}\right)q$ and $k_2 = k_Iq\left(1+l_1(1)\lambda(1)\right)$. The characteristic equation of~(\ref{eq:NDE_perturbed}) is given by
\begin{equation}
  \label{eq:char-eq}
  s - \left(k_1s +k_2\right)e^{-s \tau} = 0 \, .
\end{equation}
%The following has been proved.
We recall the following theorem that gives conditions to ensure the stability of~$\eqref{eq:NDE_perturbed}$ in the absence of disturbances. 
\begin{theorem}\cite{CT15}
  \label{theo:coron_tamasoiu}
  Let us assume that $k_2 \neq 0$. The characteristic equation~\eqref{eq:char-eq} has its zeroes in the complex half-left plane if and only if the feedback parameters $k_1$ and $k_2$ satisfy $\left|k_1\right|<1$, $k_2 < 0$ and the time delay $\tau$ is such that $\tau \in \left(0,\tau_0\right)$ where $\tau_0$ is defined by
  \begin{align}
    \tau_0 & = 
        -\frac{\sqrt{1-k_1^2}}{\left|k_2\right|}\arctan\left(\frac{\sqrt{1-k_1^2}}{\left|k_1\right|}\right) \nonumber \\
    & \hphantom{=} + \frac{\pi\sqrt{1-k_1^2}}{\left|k_2\right|} \, ,\quad  \text{ if } k_1 \in (-1,0) \\
    \tau_0 & =\frac{\pi}{2\left|k_2\right|}\, , \quad \text{ if } k_1 = 0 \, , \\
    \tau_0 & = \frac{\sqrt{1-k_1^2}}{\left|k_2\right|} \arctan\left(\frac{\sqrt{1-k_1^2}}{k_2}\right)\, , \, \text{if } k_1 \in (0,1) \, .
  \end{align}
\end{theorem}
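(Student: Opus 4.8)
The statement is a root-location result for the neutral-type quasi-polynomial in~\eqref{eq:char-eq}, so the plan is to combine the asymptotic (essential) spectrum of neutral equations with a continuity-in-$\tau$ root-tracking argument. The three conditions $|k_1|<1$, $k_2<0$, and $\tau\in(0,\tau_0)$ each control a different mechanism by which a zero of~\eqref{eq:char-eq} can reach the closed right half-plane: the first governs the chain of zeroes escaping to infinity, the second the behaviour at $\tau=0$ (equivalently on the real axis), and the third the first imaginary-axis crossing as $\tau$ grows.

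First I would establish necessity. For $|k_1|<1$: writing~\eqref{eq:char-eq} as $s\left(1-k_1e^{-s\tau}\right)-k_2e^{-s\tau}=0$, the zeroes with imaginary part tending to infinity are asymptotic to those of the difference operator $1-k_1e^{-s\tau}$, which accumulate along the vertical line $\mathrm{Re}(s)=\tfrac{1}{\tau}\ln|k_1|$; hence if $|k_1|\ge 1$ there are infinitely many zeroes with nonnegative real part, and $|k_1|<1$ is forced. For $k_2<0$: restricting~\eqref{eq:char-eq} to real $s$ and setting $g(s)=s-(k_1 s+k_2)e^{-s\tau}$, one has $g(0)=-k_2$ while $g(s)\to+\infty$ as $s\to+\infty$, so if $k_2\ge 0$ the intermediate value theorem produces a nonnegative real zero, contradicting stability. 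Thus both $|k_1|<1$ and $k_2<0$ are necessary.

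Next I would locate the imaginary-axis crossings. Substituting $s=i\omega$ into~\eqref{eq:char-eq} and separating real and imaginary parts gives
\begin{align}
k_2\cos(\omega\tau)+k_1\omega\sin(\omega\tau) &= 0, \\
\omega\left(1-k_1\cos(\omega\tau)\right)+k_2\sin(\omega\tau) &= 0.
\end{align}
Taking moduli in~\eqref{eq:char-eq} yields $\omega^2=k_2^2+k_1^2\omega^2$, so (using $|k_1|<1$) the only admissible crossing frequency is $\omega_c=|k_2|/\sqrt{1-k_1^2}$. Feeding $\omega_c$ back into the two trigonometric relations fixes $\omega_c\tau$ modulo $2\pi$, and the smallest positive delay solving them is exactly the claimed $\tau_0$. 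The three-way split in the definition of $\tau_0$ arises from the quadrant of $k_2+ik_1\omega_c$ (recall $k_2<0$): the sign of $k_1$ decides whether the relevant branch of the inverse tangent must be shifted by $\pi$, which is precisely the distinction between the $k_1\in(-1,0)$, $k_1=0$, and $k_1\in(0,1)$ expressions. As a consistency check, when $k_1=0$ the relations force $e^{-i\omega_c\tau}=-i$, i.e. $\omega_c\tau=\pi/2$, giving $\tau_0=\pi/(2|k_2|)$.

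Finally I would close the equivalence by a transversality argument. At $\tau=0$ the sole zero is $s=k_2/(1-k_1)<0$, and since $|k_1|<1$ the chain at infinity also lies strictly in the left half-plane; by continuity of the zeroes in $\tau$, none can reach the imaginary axis before the smallest critical delay $\tau_0$, so all zeroes remain in the open left half-plane for $\tau\in(0,\tau_0)$. To show the crossing at $\tau_0$ is genuinely from left to right, I would differentiate~\eqref{eq:char-eq} implicitly, using $(k_1 s+k_2)e^{-s\tau}=s$ to simplify, obtaining
\begin{equation}
\frac{ds}{d\tau}=-\frac{s^2(k_1 s+k_2)}{k_2+\tau s(k_1 s+k_2)},
\end{equation}
and then verify that $\mathrm{Re}\left(ds/d\tau\right)>0$ at $s=i\omega_c$, $\tau=\tau_0$, so that for $\tau$ slightly above $\tau_0$ a conjugate pair of zeroes enters the right half-plane; this completes the ``only if'' direction. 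The main obstacle is this last cluster of transcendental computations, namely pinning down the correct branch of the inverse tangent to reproduce the three explicit forms of $\tau_0$ and confirming the sign of $\mathrm{Re}\left(ds/d\tau\right)$; by comparison the structural necessity of $|k_1|<1$ and $k_2<0$ together with the continuity argument are comparatively routine.
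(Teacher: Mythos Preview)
The paper does not supply its own proof of this theorem: it is quoted verbatim from~\cite{CT15} and invoked as a black box (``We recall the following theorem\dots''). There is therefore nothing in the paper to compare your argument against; the authors simply cite the result and move on to use it in Proposition~\ref{prop:ISS_NDE}.

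That said, your proposal is a sound reconstruction of the standard proof of such a delay-margin result, and it is essentially the line of argument one finds in the original reference. The three ingredients --- control of the essential spectrum via $|k_1|<1$, the real-axis sign argument forcing $k_2<0$, and the computation of the first imaginary crossing $\omega_c=|k_2|/\sqrt{1-k_1^2}$ followed by a transversality check --- are exactly what is needed. One small point worth tightening: in your necessity argument for $k_2<0$ you should note explicitly that $s=0$ itself is a zero when $k_2=0$, since the intermediate value theorem as stated only handles the strict case $k_2>0$. Otherwise the sketch is correct, and the remaining work you flag (branch bookkeeping for the three $\tau_0$ formulas and the sign of $\mathrm{Re}\,ds/d\tau$ at the crossing) is indeed the tedious but routine part.
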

We recall the definition of Input-to-State Stability (ISS).
\begin{definition}
  The system described by the equations~\eqref{eq:NDE_perturbed} is said to be
  Input-to-State Stable (ISS) if there exist a~$\KL$ function~$f$
  and a~$\K$ function~$g$ such that, for any bounded initial
state~$\left(\overline{\alpha}^0,\overline{\beta}^0\right)^\top$ and any measurable locally essentially bounded
input~$K$, the solution exists for all~$t\geq 0$, and furthermore it
satisfies
\begin{align}
  \left|
    \overline{\alpha}(t,1)
  \right| & \leq
f\left(\left\lVert\overline{\alpha}_0\right\rVert_\infty+\left\lVert
    \overline{\beta}_0\right\rVert_\infty,t\right) \nonumber \\
  & \hphantom{\leq} + g\left(\left\lVert
    K\right\rVert_{L^\infty((0,t);\R)}\right) \, .
\end{align}
\end{definition}
Using this result and the fact that $\left(\rho-\tilde{\rho}\right)q<1$ we may state the following Proposition assessing the ISS of system~\eqref{eq:NDE_perturbed}.
\begin{proposition}
  \label{prop:ISS_NDE}
	Let us choose $k_I$ such that conditions of Theorem~\ref{theo:coron_tamasoiu} for $k_1 = \left(\rho-\tilde{\rho}\right)q$ and $k_2= k_Iq\left(1+l_1(1)\lambda(1)\right)$ hold, then system~\eqref{eq:NDE_perturbed} is ISS with
  respect to the input~$K$.
\end{proposition}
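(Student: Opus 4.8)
The plan is to treat~\eqref{eq:NDE_perturbed} as a linear time-invariant neutral functional differential equation and to derive the ISS estimate from an exponentially decaying fundamental solution combined with a variation-of-constants representation. Writing $a(t)=\overline{\alpha}(t,1)$, the equation takes the neutral form
\begin{align}
  \frac{d}{dt}\left(a(t)-k_1 a(t-\tau)\right)=k_2\,a(t-\tau)+K(t),\label{plan:neutral_form}
\end{align}
with difference operator $D\phi=\phi(0)-k_1\phi(-\tau)$, whose spectrum is the zero set of $1-k_1e^{-s\tau}$. The history required to integrate~\eqref{plan:neutral_form} is the restriction of $a$ to an interval of length $\tau$; through the characteristic formulas~\eqref{eq:alpha_bar_1}--\eqref{eq:beta_bar_0} this history depends linearly and boundedly on $(\overline{\alpha}^0,\overline{\beta}^0)$, which is what will ultimately produce the term $\lVert\overline{\alpha}_0\rVert_\infty+\lVert\overline{\beta}_0\rVert_\infty$ in the final estimate.

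First I would show that the fundamental solution $X$ of the unforced equation satisfies $|X(t)|\le Me^{-\sigma t}$ for some $M,\sigma>0$. The hard part, and the genuine subtlety of the neutral case, is that this does \emph{not} follow merely from~\eqref{eq:char-eq} having all its roots in the open left half-plane: a priori the roots could accumulate onto the imaginary axis from the left, destroying exponential decay. This is excluded by stability of the difference operator. Indeed, since Theorem~\ref{theo:coron_tamasoiu} imposes $|k_1|=|(\rho-\tilde{\rho})q|<1$, every root of $1-k_1e^{-s\tau}=0$ has $\mathrm{Re}(s)=\tau^{-1}\ln|k_1|<0$ and stays bounded away from the imaginary axis. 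By the spectral theory of neutral equations, stability of $D$ together with the absence of roots of~\eqref{eq:char-eq} in $\{\mathrm{Re}(s)\ge0\}$ guarantees a uniform negative bound $-\sigma<0$ on the real parts of all roots of~\eqref{eq:char-eq}, from which the exponential decay of $X$ follows.

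Finally I would invoke the variation-of-constants formula for~\eqref{plan:neutral_form}, splitting the solution into the contribution of the initial history and the forced term $\int_0^t X(t-s)K(s)\,ds$. Exponential decay of $X$ bounds the first contribution by a quantity that decreases to zero in $t$ and increases in $\lVert\overline{\alpha}_0\rVert_\infty+\lVert\overline{\beta}_0\rVert_\infty$, i.e.\ a $\KL$ function $f$. For the forced term,
\begin{align}
  \left|\int_0^t X(t-s)K(s)\,ds\right| & \le M\int_0^t e^{-\sigma(t-s)}\,ds\,\lVert K\rVert_{L^\infty((0,t);\R)}\nonumber\\
  & \le \frac{M}{\sigma}\lVert K\rVert_{L^\infty((0,t);\R)},
\end{align}
a linear and hence $\K$ function $g$ of $\lVert K\rVert_{L^\infty((0,t);\R)}$; measurability and local essential boundedness of $K$ make this convolution well defined and the solution global. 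Adding the two bounds reproduces exactly the defining inequality of ISS, which establishes the claim and completes the proof.
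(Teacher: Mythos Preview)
Your argument is correct and follows essentially the same route as the paper: invoke the variation-of-constants formula for~\eqref{eq:NDE_perturbed}, obtain an exponential bound $\lVert X(t)\rVert\le k e^{st}$ with $s<0$ on the fundamental solution from the location of the characteristic roots, and combine these to get the ISS estimate. The paper simply cites \cite{HL13} for the representation formula and the bound on $X$, asserts that Theorem~\ref{theo:coron_tamasoiu} gives $s_0<0$, and concludes in one line.

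One point worth noting: you are more careful than the paper on the passage from ``all roots of~\eqref{eq:char-eq} lie in the open left half-plane'' to ``the spectral abscissa $s_0$ is strictly negative.'' You correctly observe that for neutral equations this is not automatic and requires stability of the difference operator, which here follows from $|k_1|=|(\rho-\tilde{\rho})q|<1$. The paper states $s_0<0$ as a direct consequence of Theorem~\ref{theo:coron_tamasoiu} without isolating this step, so your treatment is in fact slightly more complete on this point.
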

\begin{proof}
  Let us denote $z(t) = \overline{\alpha}(t,1)$. The variation-of-constants formula for the NDE~\eqref{eq:NDE_perturbed} reads (see~\cite{HL13} page~31)
  \begin{align}
    z\left(\left(\overline{\alpha}^0,\overline{\beta}^0\right),K\right)(t) & = z\left(\left(\overline{\alpha^0},\overline{\beta}^0\right),0\right)(t) \nonumber \\
    & \hphantom{=} + \int_0^t X(t-s)K(s)ds \, , \label{eq:representation_formula}
  \end{align}
  where $z\left(\left(\overline{\alpha}^0,\overline{\beta}^0\right),0\right)(t)$ denotes the solution of the homogeneous NDE~\eqref{eq:NDE_perturbed} (i.e. when $K \equiv 0$) in term of the fundamental solution $X$ (see~\cite{HL13} for a definition of the fundamental solution). Theorem~7.6 page 32 in~\cite{HL13} guarantees that if $s_0$ is the supremum of the real part of the roots of the characteristic equation~\eqref{eq:char-eq} then for any $s > s_0$ there exists $k = k\left(s\right)$ such that  the fundamental solution $X$ satisfies the inequality
  \begin{equation}
    \label{eq:inequality_X}
    \left\lVert X(t) \right\Vert \leq ke^{s t} \, , \quad t \geq 0 \, .
  \end{equation}
  Conditions of Theorem~\ref{theo:coron_tamasoiu} ensure that $s_0 <
  0$ and consequently that there exists $s<0$ and $k$ such
  that inequality~\eqref{eq:inequality_X} holds. Then, using this bound together with the representation formula~\eqref{eq:representation_formula} we immediately conclude the proof of Proposition~\ref{prop:ISS_NDE}.
\end{proof}

\subsection{Output Regulation}
\label{sec:output-regulation-1}

The following theorem assesses the output regulation of
system~(\ref{eq:perturbed_system_1})--(\ref{eq:perturbed_system_4}),~(\ref{eq:Controller}),~\eqref{eq:eta_dot},
and~\eqref{eq:U_2}.
\begin{theorem}
  \label{theo:rejection_1}
  Consider system~(\ref{eq:perturbed_system_1}),~(\ref{eq:perturbed_system_2}) with
  boundary
  conditions~(\ref{eq:perturbed_system_3}),~(\ref{eq:perturbed_system_4})
  where $U$ is given by~\eqref{eq:Controller} with $U_{BS}$ given by~\eqref{eq:U_2}, $\eta$
  satisfying~\eqref{eq:eta_dot}, and with bounded initial conditions
  $\left(u^0,v^0,\eta^0\right) \in E$. Then, assuming that conditions of Proposition~\ref{prop:ISS_NDE} hold, there exists a positive constant $M$ such
  that the controlled output $y(t)$ satisfies
  \begin{equation}
    \label{eq:boundedness_y}
    \left| y(t)\right| \leq M \, .
  \end{equation}
  Furthermore, if $\partial_t d_1 = \partial_t d_2 = \dot{d}_3 =
  \dot{d}_4 = n = 0$, then the controlled output satisfies
  \begin{align}
    \label{eq:rejection_1}
    \lim_{t\rightarrow \infty} \left| y(t)\right| = 0\, .
  \end{align}
\end{theorem}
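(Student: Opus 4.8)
The plan is to transfer the ISS bound on $\overline{\alpha}(t,1)$ furnished by Proposition~\ref{prop:ISS_NDE} back to the physical output $y(t)=u(t,1)$. The starting point is the identity built into the construction of the pseudo-steady state, namely $u^{ss}(t,1)=\alpha^{ss}(t,1)+\int_0^1 L^{\alpha\alpha}(1,\xi)\alpha^{ss}(t,\xi)d\xi+\int_0^1 L^{\alpha\beta}(1,\xi)\beta^{ss}(t,\xi)d\xi=0$. Evaluating the inverse transformation~\eqref{back_inv1} at $x=1$ and subtracting this identity, I would write $y(t)=\overline{\alpha}(t,1)+\int_0^1 L^{\alpha\alpha}(1,\xi)\overline{\alpha}(t,\xi)d\xi+\int_0^1 L^{\alpha\beta}(1,\xi)\overline{\beta}(t,\xi)d\xi$. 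Since $L^{\alpha\alpha},L^{\alpha\beta}\in L^\infty(\mathcal{T})$, it then suffices to bound the trace $\overline{\alpha}(t,1)$ together with the spatial norms $\|\overline{\alpha}(t,\cdot)\|_{E'}$ and $\|\overline{\beta}(t,\cdot)\|_{E'}$.

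Next I would verify that the forcing $K$ of the NDE~\eqref{eq:NDE_perturbed} lies in $L^\infty$: by Lemma~\ref{lem:existence_ss} the maps $\alpha^{ss}(\cdot,x),\beta^{ss}(\cdot,x)$ are in $W^{2,\infty}$ uniformly in $x$, so $\alpha^{ss}_{tt},\beta^{ss}_{tt}$ are bounded; $\dot{\eta}^{ss}$ is bounded because $\eta^{ss}$ in~\eqref{eq:eta_ss} is a bounded combination of $\alpha^{ss},\beta^{ss}$ and $d_4$, all of regularity $W^{2,\infty}$ by Assumption~\ref{assum:disturbance_regularity}; and $n\in L^\infty$ by the same assumption. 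Proposition~\ref{prop:ISS_NDE} then gives $|\overline{\alpha}(t,1)|\le f(\|\overline{\alpha}^0\|_\infty+\|\overline{\beta}^0\|_\infty,t)+g(\|K\|_{L^\infty})$, uniformly bounded in $t$. The step I expect to be the crux is bounding the integrator variable $\gamma$: integrating its defining ODE~\eqref{bound_gamma} would only produce a possibly drifting estimate, since $\gamma$ is the primitive of a merely bounded signal. Instead I would exploit the delay identity~\eqref{eq:alpha_bar_1_2}, which expresses $k_Iq\,\gamma(t-\tau)$ algebraically through $\overline{\alpha}(t,1)$, $\overline{\alpha}(t-\tau,1)$ and the same bounded $\alpha^{ss}_t,\beta^{ss}_t$ integrals. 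Because the conditions of Theorem~\ref{theo:coron_tamasoiu} force $k_2\neq 0$, hence $k_Iq\neq 0$, solving for $\gamma$ shows it is bounded on $[0,\infty)$, and therefore $\overline{\beta}(t,1)=(\rho-\tilde{\rho})\overline{\alpha}(t,1)+k_I\gamma(t)$ from~\eqref{bound_beta_bar} is bounded as well.

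With both boundary traces under control, I would close the estimate by the method of characteristics on the error transport equations~\eqref{eq_alpha_bar}--\eqref{eq_beta_bar}, whose sources $-\alpha^{ss}_t,-\beta^{ss}_t$ are in $L^\infty$. Propagating $\overline{\beta}$ inward from $x=1$ bounds $\overline{\beta}(t,\cdot)$ and in particular $\overline{\beta}(t,0)$; the boundary relation~\eqref{bound_alpha_bar} then bounds $\overline{\alpha}(t,0)$, and propagating $\overline{\alpha}$ from $x=0$ bounds $\overline{\alpha}(t,\cdot)$. On the initial interval $t\in[0,\tau]$, before characteristics fully traverse the domain, the $L^\infty$ bound follows from continuity of the solution guaranteed by Theorem~\ref{theo:wellposedness}. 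Inserting these uniform estimates into the expression for $y(t)$ above yields a constant $M$ with $|y(t)|\le M$, which is~\eqref{eq:boundedness_y}.

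For the rejection statement~\eqref{eq:rejection_1}, when $\partial_t d_1=\partial_t d_2=\dot{d}_3=\dot{d}_4=n=0$ the pseudo-steady state becomes time-independent, so $\alpha^{ss}_t=\beta^{ss}_t=0$, $\dot{\eta}^{ss}=0$, and consequently $K\equiv 0$ for $t\ge\tau$. The homogeneous NDE is then exponentially stable, since the roots of~\eqref{eq:char-eq} lie in the open left half-plane under the standing conditions, so $\overline{\alpha}(t,1)\to 0$; the delay identity~\eqref{eq:alpha_bar_1_2} with vanishing source forces $\gamma(t)\to 0$, whence $\overline{\beta}(t,1)\to 0$, and the characteristics propagation with zero source drives $\overline{\alpha}(t,\cdot),\overline{\beta}(t,\cdot)\to 0$ uniformly. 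As $y(t)$ equals the displayed combination of these decaying quantities, I obtain $\lim_{t\to\infty}|y(t)|=0$.
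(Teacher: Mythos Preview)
Your proposal is correct, and the overall architecture---express $y(t)$ through $\overline{\alpha},\overline{\beta}$ via the inverse transformation and the pseudo-steady identity, invoke Proposition~\ref{prop:ISS_NDE} for $\overline{\alpha}(t,1)$, then push boundedness through the domain by characteristics---is exactly what the paper does. The difference lies in how you route the boundedness from the trace $\overline{\alpha}(t,1)$ to the full state.

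You go the long way around: you recover $\gamma$ from the delay identity~\eqref{eq:alpha_bar_1_2} (legitimate, since $k_2\neq 0$ forces $k_Iq\neq 0$), then bound $\overline{\beta}(t,1)$ via~\eqref{bound_beta_bar}, propagate $\overline{\beta}$ from $x=1$ leftward, read off $\overline{\alpha}(t,0)=q\overline{\beta}(t,0)$, and finally propagate $\overline{\alpha}$ from $x=0$. The paper bypasses $\gamma$ and $\overline{\beta}(t,1)$ entirely: it uses the $\alpha$-characteristic directly, $\overline{\alpha}(t,1)=\overline{\alpha}(t-\tau_1,0)+\text{bounded}$ (this is~\eqref{eq:alpha_bar_1}), to get $\overline{\alpha}(\cdot,0)$ bounded straight from $\overline{\alpha}(\cdot,1)$; then for $\overline{\beta}$ it traces the characteristic \emph{forward} to the left boundary, writing $\overline{\beta}(t,x)=\tfrac{1}{q}\overline{\alpha}(t+\phi_2(x),0)+\text{bounded}$ (relation~\eqref{eq:beta_bar_relation_proof_theo_1}). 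This is shorter and avoids the algebraic inversion for $\gamma$, but your route has the side benefit of establishing that the integrator error $\gamma$ (hence $\overline{\eta}$) stays bounded, which the paper does not make explicit. For the rejection part both arguments collapse to the same observation that $K\equiv 0$ when the disturbances are static and $n=0$, so the difference is purely organizational.
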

\begin{proof}
  Let us recall that one has
  \begin{align}
    \lim_{t\rightarrow \infty} |u(t,1)|& = \lim_{t\rightarrow \infty} \left|\alpha(t,1)+\int_0^1L^{\alpha \alpha}(1,\xi) \alpha(t,\xi)d\xi \right. \nonumber \\
    & \hphantom{=} \left.+ \int_0^1 L^{\alpha \beta}(1,\xi) \beta(t,\xi) d\xi\right| \nonumber \\
                 & =\lim_{t\rightarrow \infty}
                   \left|\bar{\alpha}(t,1)+\int_0^1L^{\alpha
                   \alpha}(1,\xi) \bar{\alpha}(t,\xi)  d\xi \right.\nonumber \\
    & \hphantom{=} \left. + \int_0^1L^{\alpha\beta}(1,\xi) \bar{\beta}(t,\xi)
                   d\xi\right| \, . \label{eq:lim_u1}
  \end{align}
  Now let us observe that for all $t \geq \tau$ and all $\theta \in [0,x]$,
  \begin{align}
     \label{eq:alpha_bar_relation_proof_theo_1} \overline{\alpha}(t,x) & = \overline{\alpha}\left(t-\int_\theta^x
    \frac{1}{\lambda(\zeta)}d\zeta,\theta\right)     \nonumber \\
    & \hphantom{=} - \int_\theta^x
    \frac{1}{\lambda(\zeta)}\alpha_t^{ss} \left(t-\int_\zeta^x
                             \frac{1}{\lambda(s)}ds,s\right)d\zeta \, ,
    \\
    \overline{\beta}(t,x) % & = \overline{\beta}\left(t+\int_0^x
    % \frac{1}{\mu(\xi)}d\xi,0\right) \nonumber \\
    % & \hphantom{=} + \int_0^x
    % \frac{1}{\mu(\xi)}\beta^{ss}_t\left(t+\int_\xi^x
    %                         \frac{1}{\mu(\zeta)}d\zeta,\xi\right)d\xi
    %                         \nonumber \\
    & =
      \frac{1}{q}\overline{\alpha}\left(t+\phi_2(x),0\right) \nonumber \\
    & \hphantom{=} + \int_0^x
    \frac{1}{\mu(\xi)}\beta^{ss}_t\left(t+\int_\xi^x
                            \frac{1}{\mu(\zeta)}d\zeta,\xi\right)d\xi
      \, . \label{eq:beta_bar_relation_proof_theo_1}
  \end{align}
  Besides, Lemma~\ref{lem:existence_ss} ensures that
  $\alpha_t^{ss}$ and $\beta_t^{ss}$ are bounded. Therefore, relationships~\eqref{eq:alpha_bar_relation_proof_theo_1}
  and~\eqref{eq:beta_bar_relation_proof_theo_1} combined with the ISS
  of $\overline{\alpha}(t,1)$ as proved in
  Proposition~\ref{prop:ISS_NDE} ensure that
  $\overline{\alpha}(t,x)$ and $\overline{\beta}(t,x)$ are bounded for
  all $x \in [0,1]$. Then, with~\eqref{eq:lim_u1} one
  gets~\eqref{eq:boundedness_y}. Now, if $\partial_t d_1 = \partial_t d_2 = \dot{d}_3 =
  \dot{d}_4 = n = 0$, then $\alpha_t^{ss} = \beta_t^{ss} =
  \dot{\eta}^{ss} = 0$ where $\alpha^{ss}$ and $\beta^{ss}$ are
  solutions to the ODE given in~\eqref{steady_eq}, and
  $\eta^{ss}$ is given in~\eqref{eq:eta_ss}. In virtue of the ISS of system~\eqref{eq:NDE_perturbed} stated in Proposition~\ref{prop:ISS_NDE} and using the relationships~\eqref{eq:alpha_bar_relation_proof_theo_1}
  and~\eqref{eq:beta_bar_relation_proof_theo_1} one has
  \begin{align}
    \lim_{t\rightarrow \infty} |u(t,1)|% & = \lim_{t\rightarrow \infty}
                                       %   \left|\bar{\alpha}(t,1)+\int_0^1L^{\alpha
                                       %   \alpha}(1,\xi) \bar{\alpha}(t,\xi)  d\xi \right.\nonumber \\
                                       % & \hphantom{=} \left. + \int_0^1L^{\alpha\beta}(1,\xi) \bar{\beta}(t,\xi)
                                       %   d\xi\right| \nonumber \\
                                       & =  \left|\alpha^{ss}(1)+\int_0^1L^{\alpha \alpha}(1,\xi) \alpha^{ss}(\xi)d\xi \right. \nonumber \\
                                       & \hphantom{=} \left.+ \int_0^1 L^{\alpha \beta}(1,\xi) \beta^{ss}(\xi) d\xi\right| = 0 \, .
  \end{align}
  This concludes the proof of Theorem~\ref{theo:rejection_1}.
\end{proof}

%%% Local Variables:
%%% mode: latex
%%% TeX-master: "ACC_I_Theoretical"
%%% End:

\section{Boundary Observer}
\label{sec:boundary-observer}

In this section we design an observer that relies on the noisy measurements at the right boundary:~${y_m(t)=u(t,1)+n(t)}$. This observer will be designed as a function of a parameter~$\epsilon$ that can be interpreted as a measure of trust in our measurements relative to the model (or unmeasured disturbances).

\subsection{Observer Design}

Similarly to \cite{VKC11}, the observer equations are set as follows
\begin{align}
  \hat{u}_t+\lambda(x)\hat{u}_x=&\gamma_1(x)\hat{v}
                                  -P^+(x)\left(\hat{u}(t,1)-y_m(t)\right) \label{hat_u} \\
  \hat{v}_t-\mu(x) \hat{v}_x=&\gamma_2(x)\hat{u}
                               -P^-(x)\left(\hat{u}(t,1)-y_m(t)\right)
                               \, , \label{hat_v}
\end{align}
with the modified boundary conditions 
\begin{align}
  &\hat{u}(t,0)=q\hat{v}(t,0) \label{hat_boundary1}\\
  &\hat{v}(t,1)=\rho (1-\epsilon) \hat{u}(t,1) +\rho \epsilon y_m(t) +
    U(t) \, . \label{hat_boundary}
\end{align}
The gains~$P^+(\cdot)$ and~$P^-(\cdot)$ are defined as
\begin{align}
  P^+(x) & =-\lambda(x) P^{uu}(x,1)+\mu(x) \rho (1-\epsilon) P^{uv}(x,1) \label{eq_P_eps_1} \\
  P^-(x) & =-\lambda(x) P^{vu}(x,1)+\mu(x) \rho (1-\epsilon) P^{vv}(x,1)
           \, , \label{eq_P_eps_2}
\end{align} 
where the kernels~$P^{uu}, P^{uv}, P^{vu}$, and~$P^{vv}$ are defined in~\cite{VKC11}.
\begin{remark}
  The coefficient~$\epsilon \in [0,1]$ in~\eqref{hat_boundary} can be
  interpreted as a measure of trust in our measurements relative to
  the model (or unmeasured disturbances), where~$\epsilon=1$ results
  in relying more on the measurements and~$\epsilon=0$ relying more on
  the model. This trade-off will be made explicit in terms of the magnitude of
 ~$d_i$,~$i=1,\dots,4$ relative to~$n$ in the following.
\end{remark}

\begin{remark}
  The coefficient~$\epsilon$ cannot be chosen arbitrarily in~$[0,1]$. As it will appear in the next subsection, it has to be close enough to 1 to ensure the convergence of the observer.
\end{remark}

Combining the observer \eqref{hat_u}--\eqref{hat_boundary} to the system \eqref{eq:perturbed_system_1}--\eqref{eq:perturbed_system_4} yields the error system (denoting~$\tilde{u}(t,x)=u(t,x)-\hat{u}(t,x)$ and~$\tilde{v}(t,x)=v(t,x)-\hat{v}(t,x)$):
\begin{align}
  \tilde{u}_t+\lambda(x) \tilde{u}_x& =\gamma_1(x)\tilde{v} -P^+(x)\tilde{u}(t,1)\nonumber \\
                                              &\hphantom{=}
                                                -n(t)P^+(x) +d_1(t)m_1(x) \label{tilde_u}\\
  \tilde{v}_t-\mu(x) \tilde{v}_x& =\gamma_2(x)\tilde{u} -P^-(x)\tilde{u}(t,1)\nonumber \\
                                              & \hphantom{=}
                                                -n(t)P^-(x)  +
                                                d_2(t)m_2(x) \, , \label{tilde_v}
\end{align}
with the boundary conditions 
\begin{align}
  \tilde{u}(t,0)&=q\tilde{v}(t,0)+d_3(t),\label{tilde_boundary0} \\
 \tilde{v}(t,1)&=\rho(1-\epsilon)\tilde{u}(t,1)+d_4(t)-\rho \epsilon
    n(t) \, .\label{tilde_boundary}
\end{align}

\subsection{Ideal Error System}
In this section, we consider the unperturbed system with uncorrupted measurements; to give insight on the impact of~$\epsilon$ in the ideal case. Using the backstepping approach and a Volterra transformation identical to the one presented in~\cite{VKC11}, we can map system \eqref{tilde_u}--\eqref{tilde_boundary} to a simpler target system.
Consider the kernels~$P^{uu}, P^{uv}, P^{vu}$, and~$P^{vv}$ defined in~\cite{VKC11} and the following Volterra transformation
\begin{align}
  \tilde{u}(t,x)=\tilde{\alpha}_{id}(t,x)-\int_x^1(P^{uu}(x,\xi)\tilde{\alpha}_{id}(t,\xi) \nonumber \\
  +P^{uv}(x,\xi)\tilde{\beta}_{id}(t,\xi))d\xi  \label{eq_M0} \\
  \tilde{v}(t,x)=\tilde{\beta}_{id}(t,x)-\int_x^1(P^{vu}(x,\xi)\tilde{\alpha}_{id}(t,\xi) \nonumber \\
  +P^{vv}(x,\xi)\tilde{\beta}_{id}(t,\xi))d\xi \, . \label{eq_N0}
\end{align}
Differentiating \eqref{eq_M0} and \eqref{eq_N0} with respect to space and time, one can prove that system \eqref{tilde_u}--\eqref{tilde_boundary} is equivalent to the following system 
\begin{align}
  &(\tilde{\alpha}_{id})_t+\lambda(x) (\tilde{\alpha}_{id})_x=0 \label{alpha_eq}\\
  &(\tilde{\beta}_{id})_t-\mu(x) (\tilde{\beta}_{id})_x=0 \, ,
\end{align} 
with the following boundary conditions
\begin{align}
  \tilde{\alpha}_{id}(t,0)&=q\tilde{\beta}_{id}(t,0)\\
  \tilde{\beta}_{id}(t,1)&=\rho(1-\epsilon)\tilde{\alpha}_{id}(t,1) \, . \label{alpha_bound}
\end{align}
We then have the following lemma (see e.g \cite{A17} for details).
\begin{lemma}
  System \eqref{alpha_eq}--\eqref{alpha_bound} is exponentially stable if and only if 
  \begin{align}
    1-\frac{1}{|\rho q|} < \epsilon\leq 1 \, . \label{epsilon}
  \end{align}
\end{lemma}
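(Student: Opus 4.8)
The plan is to collapse the distributed dynamics onto the single scalar boundary trace $\tilde{\alpha}_{id}(t,1)$ by means of the method of characteristics, and then to read off stability from the resulting scalar difference equation. Since the right-hand sides of \eqref{alpha_eq} vanish, $\tilde{\alpha}_{id}$ is constant along the forward characteristics $\dot{x}=\lambda(x)$ and $\tilde{\beta}_{id}$ is constant along the backward characteristics $\dot{x}=-\mu(x)$. Using the transport times $\tau_1$, $\tau_2$, $\tau$ introduced in \eqref{eq_tau}, this gives, for all $t\geq\tau$, the relations $\tilde{\alpha}_{id}(t,1)=\tilde{\alpha}_{id}(t-\tau_1,0)$ and $\tilde{\beta}_{id}(t-\tau_1,0)=\tilde{\beta}_{id}(t-\tau_1-\tau_2,1)$.

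First I would chain these two relations with the boundary conditions. Substituting $\tilde{\alpha}_{id}(t-\tau_1,0)=q\tilde{\beta}_{id}(t-\tau_1,0)$ and then $\tilde{\beta}_{id}(t-\tau,1)=\rho(1-\epsilon)\tilde{\alpha}_{id}(t-\tau,1)$ yields the closed scalar delay equation
\begin{equation}
  \tilde{\alpha}_{id}(t,1)=\rho q(1-\epsilon)\,\tilde{\alpha}_{id}(t-\tau,1)\,,\qquad t\geq\tau\,.
\end{equation}
All the remaining traces, and hence the full state, are recovered from $\tilde{\alpha}_{id}(\cdot,1)$ through the same characteristic formulas, so that the exponential decay of $\tilde{\alpha}_{id}(t,1)$ is equivalent to the exponential stability of $(\tilde{\alpha}_{id},\tilde{\beta}_{id})$ in $L^\infty$.

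Next I would invoke the classical stability criterion for a scalar difference equation of the form $z(t)=c\,z(t-\tau)$ (see e.g.~\cite{HL13}): such an equation is exponentially stable if and only if $|c|<1$, since the roots of its characteristic equation $1-c\,e^{-s\tau}=0$ all have real part $\tau^{-1}\ln|c|$. Applying this with $c=\rho q(1-\epsilon)$, and observing that $\epsilon\in[0,1]$ makes $1-\epsilon\geq 0$, the stability condition $|\rho q|(1-\epsilon)<1$ becomes $1-\epsilon<1/|\rho q|$, i.e.~$\epsilon>1-1/|\rho q|$, which together with $\epsilon\leq1$ is exactly \eqref{epsilon}. The limiting case $\epsilon=1$ gives $c=0$, for which $\tilde{\beta}_{id}(t,1)\equiv0$ and the state vanishes in finite time $\tau$, so it is (trivially) covered.

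The main obstacle I anticipate is the careful justification that exponential decay of the scalar trace $\tilde{\alpha}_{id}(t,1)$ transfers to the full $L^\infty$ norm of the state, and, symmetrically, that instability of the scalar equation precludes stability of the PDE; both directions rest on the boundedness and uniformity in $x$ of the characteristic reconstruction. A secondary point is to phrase the scalar difference-equation criterion in a form valid for the $L^\infty$ solutions considered here rather than for classical solutions.
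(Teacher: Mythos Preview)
Your argument is correct and is precisely the standard route: reduce the decoupled transport system to the scalar difference equation $\tilde{\alpha}_{id}(t,1)=\rho q(1-\epsilon)\,\tilde{\alpha}_{id}(t-\tau,1)$ via the characteristics, then invoke the criterion $|\rho q(1-\epsilon)|<1$. Note that the paper does not actually supply a proof of this lemma; it only cites~\cite{A17} for details, so there is no in-paper argument to compare against. Your derivation is exactly what one would expect to find behind that citation, and the two caveats you flag (lifting the trace decay to the full $L^\infty$ state via the characteristic reconstruction, and handling the ``only if'' direction) are the right points to make precise but present no real difficulty for this homogeneous cascade.
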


\begin{remark}
  In the case~$\epsilon=1$  we have the same target system as the one presented in~\cite{VKC11}. It converges in finite time~$\tau$ to zero.
\end{remark}
Note that due to Assumption~\ref{assum:function_regularity} the proposed interval is non-empty.

% \begin{lemma}
%   Consider system \eqref{eq_P_uu}-\eqref{eq_P_vv}. There exists a unique solution~$P^{uu}$,~$P^{uv}$,~$P^{vu}$ and~$P^{vv}$ in~$L^{\infty}(\mathcal{T})$. 
% \end{lemma}
% \begin{proof}
%   The proof of this theorem is the same as the one given in \cite[Theorem 4]{VKC11}. Classically (see \cite{hu2015control}, \cite{john1960continuous} and \cite{whitham2011linear}) it consists in transforming the kernel equations into integral equations using the method of characteristics. These integral equations are then solved recursively using the method of successive approximations. The invertibility of the transformation is a property of Volterra transformations.
% \end{proof}

\subsection{Error System including Noise and Disturbance}
We consider in this section the real error-system~\mbox{\eqref{tilde_u}--\eqref{tilde_boundary}}, including the noise and
disturbances~$n$,~$d_i$,~$i=1,\dots,4$. Applying the Volterra transformations~\eqref{eq_M0} and~\eqref{eq_N0}, system~\eqref{tilde_u}--\eqref{tilde_boundary} is mapped to the following target system 
\begin{align}
  \tilde{\alpha}_t+\lambda(x)
    \tilde{\alpha}_x & =n(t)f_1(x)+d_1(t)f_2(x) \nonumber \\
  & \hphantom{=} + d_2(t)f_3(x)+d_4(t)f_4(x) \label{alpha_eq_disturbed}\\
  \tilde{\beta}_t-\mu(x)
    \tilde{\beta}_x & = n(t)g_1(x)+d_1(t)g_2(x) \nonumber \\
  & \hphantom{=} +d_2(t)g_3(x)+d_4(t)g_4(x) \, , \label{beta_eq_disturbed}
\end{align} 
with the boundary conditions
\begin{align}
  \tilde{\alpha}(t,0)&=q\tilde{\beta}(t,0)+d_3(t) \label{alpha_bound_disturbed} \\
  \tilde{\beta}(t,1)&=\rho(1-\epsilon)\tilde{\alpha}(t,1)+d_4(t)-\rho\epsilon
                      n(t) \, , \label{beta_bound_disturbed}
\end{align}
where~$f_i$,~$i=1,\dots,8$, are the solutions of the following integral equations
\begin{align}
  f_1(x) & = \int_x^1\left( P^{uu}(x,\xi)f_1(\xi)+P^{uv}(x,\xi)g_1(\xi)\right)d\xi\nonumber \\
         & \hphantom{=}  -P^+(x) -\mu(1)\rho\epsilon
           P^{uv}(x,1) \label{eq:f_1} \\
  f_2(x) & = m_1(x) + \int_x^1 P^{uu}(x,\xi)f_2(\xi)d\xi \nonumber \\
         & \hphantom{=} + \int_x^1P^{uv}(x,\xi)g_2(\xi)d\xi \\
  f_3(x) & = \int_x^1
           \left(P^{uu}(x,\xi)f_3(\xi)+P^{uv}(x,\xi)g_3(\xi)\right)d\xi\\
  f_4(x) & = \mu(1)P^{uv}(x,1) + \int_x^1 P^{uu}(x,\xi)f_4(\xi)d\xi
           \nonumber \\
  & \hphantom{=} + \int_x^1 P^{uv}(x,\xi)g_4(\xi)d\xi \\
  g_1(x) & =\int_x^1\left(P^{vu}(x,\xi)f_1(\xi) + P^{vv}(x,\xi)g_1(\xi)\right)d\xi \nonumber \\
  & \hphantom{=} -P^-(x) -\mu(1)\rho\epsilon P^{vv}(x,1) \\
  g_2(x) & = \int_x^1
           \left(P^{uv}(x,\xi)f_2(\xi)+P^{vv}(x,\xi)g_2(\xi)\right)d\xi
  \\
  g_3(x) & = m_2(x) + \int_x^1 P^{vu}(x,\xi)f_3(\xi)d\xi \nonumber \\
  & \hphantom{=} +\int_x^1 P^{vv}(x,\xi)g_3(\xi)d\xi \\
  g_4(x) & = \mu(1)P^{vv}(x,1) + \int_x^1 P^{vu}(x,\xi)f_4(\xi)d\xi
           \nonumber \\
  & \hphantom{=} + \int_x^1 P^{vv}(x,\xi)g_4(\xi)d\xi \label{eq:g_4} \, .
\end{align}
The functions $f_i$ and $g_i$ are well defined as solution of an integral equation \cite{Y60}.
The following theorem states that the system is ISS with
respect to~$n$ and~$d_i$,~$i=1,\dots,4$, and thus remains
stable in presence of bounded noise and disturbances

\begin{proposition}
  \label{prop:iss_observer}Let us
  assume that~$\rho$,~$q$, and~$\epsilon$ satisfy~\eqref{epsilon}. Then, system~\eqref{alpha_eq_disturbed},~\eqref{beta_eq_disturbed} with boundary
  conditions~\eqref{alpha_bound_disturbed} and~\eqref{beta_bound_disturbed} is ISS
  with respect to~$n$ and~$d_i$,~$i=1,\dots,4$. More precisely there
  exist a~$\KL$ function~$h_1$ and a~$\K$ function~$h_2$ such that for
  any initial condition~$\left(\tilde{\alpha}^0,\tilde{\beta}^0\right)^\top \in E'$ the following
  holds, for all $t\geq 0$,
  \begin{align}
    \left\lVert \left(\tilde{\alpha},\tilde{\beta}\right)^\top
    \right\rVert_{E'} & \leq h_2\left(\left\lVert \left(n,d_1,\dots,d_4\right)^\top
    \right\rVert_{L^\infty\left((0,t);\R^5\right)}\right) \nonumber \\
    & \hphantom{=} +   h_1\left(\left(\tilde{\alpha}^0,\tilde{\beta}^0\right)^\top,t\right) \, . \label{eq:ISS_tilde_alpha_tilde_beta}
  \end{align}
\end{proposition}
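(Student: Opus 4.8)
The plan is to reduce the closed-loop behaviour of the observer error target system to a scalar delay difference equation, in the same spirit as the argument behind Proposition~\ref{prop:ISS_NDE}, and then to establish ISS for that reduced equation before propagating the resulting bound to the full state. The crucial structural observation is that the principal parts of \eqref{alpha_eq_disturbed}--\eqref{beta_eq_disturbed} are decoupled and driven only by the bounded in-domain sources $n f_i + d_1 g_j + \dots$, which are bounded since the $f_i,g_i$ lie in $L^\infty$ (as solutions of the Volterra integral equations, see~\cite{Y60}) and $n,d_i \in L^\infty$ by Assumption~\ref{assum:disturbance_regularity}.

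First I would apply the method of characteristics, exactly as in~\eqref{eq:alpha_bar_1}--\eqref{eq:beta_bar_0}. Since $\lambda,\mu>0$, $\tilde{\alpha}$ is transported from $x=0$ to $x=1$ and $\tilde{\beta}$ from $x=1$ to $x=0$, so using the transport times~\eqref{eq_tau} one can write, for $t\geq\tau$, $\tilde{\alpha}(t,1)=\tilde{\alpha}(t-\tau_1,0)+I_\alpha(t)$ and $\tilde{\beta}(t,0)=\tilde{\beta}(t-\tau_2,1)+I_\beta(t)$, where $I_\alpha$ and $I_\beta$ are integrals of the (time-shifted) source terms along the characteristics. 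Inserting the two boundary conditions~\eqref{alpha_bound_disturbed} and~\eqref{beta_bound_disturbed} and eliminating the interior traces, the variable $z(t):=\tilde{\alpha}(t,1)$ is found to obey
\begin{align}
z(t)=q\rho(1-\epsilon)\,z(t-\tau)+w(t), \qquad t\geq \tau,
\end{align}
where $w(t)$ collects all the boundary contributions ($d_3(t-\tau_1)$, $q d_4(t-\tau)$, $-q\rho\epsilon\,n(t-\tau)$) together with $qI_\beta(t-\tau_1)+I_\alpha(t)$. By boundedness of the kernels and the inputs, $w$ is dominated by a $\K$ function of $\lVert (n,d_1,\dots,d_4)^\top\rVert_{L^\infty((0,t);\R^5)}$.

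Next I would exploit the contraction. Condition~\eqref{epsilon} is equivalent to $|q\rho(1-\epsilon)|<1$, so iterating the difference equation over $k=\lfloor t/\tau\rfloor$ steps gives
\begin{align}
z(t)=\left(q\rho(1-\epsilon)\right)^{k}z(t-k\tau)+\sum_{j=0}^{k-1}\left(q\rho(1-\epsilon)\right)^{j}w(t-j\tau).
\end{align}
The geometric factor $|q\rho(1-\epsilon)|^{k}$ produces the decaying $\KL$ contribution of the initial data $z|_{[0,\tau)}$ (itself controlled by $(\tilde{\alpha}^0,\tilde{\beta}^0)$ through the characteristics on the first transport interval), while the summable series bounds the input part by $\frac{1}{1-|q\rho(1-\epsilon)|}\sup_{s\le t}|w(s)|$, giving the $\K$ contribution. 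This yields ISS of the boundary trace $z(t)=\tilde{\alpha}(t,1)$. Finally, re-inserting this bound, together with the ISS bound on $\tilde{\beta}(t,1)$ obtained from~\eqref{beta_bound_disturbed}, into the characteristic representations of $\tilde{\alpha}(t,x)$ and $\tilde{\beta}(t,x)$ and taking the supremum over $x\in[0,1]$ delivers~\eqref{eq:ISS_tilde_alpha_tilde_beta} with the claimed $h_1\in\KL$ and $h_2\in\K$.

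The main obstacle is the careful bookkeeping of the time-shifted source integrals $I_\alpha,I_\beta$ and of $w$: one must verify that their arguments stay within $(0,t)$ and that they genuinely assemble into a single $\K$-function bound on the joint input norm, uniformly in $x$, rather than a crude estimate. The conceptual crux — recognizing that, unlike the neutral equation of Proposition~\ref{prop:ISS_NDE}, the absence of an integral/in-domain feedback term here collapses the loop into a \emph{pure} delay recursion with contraction factor $q\rho(1-\epsilon)$ — is what makes the estimate clean once it is identified.
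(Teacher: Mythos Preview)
Your proposal is correct and follows essentially the same approach as the paper: both use the method of characteristics to exhibit the contraction factor $q\rho(1-\epsilon)$, iterate over multiples of $\tau$, and sum the resulting geometric series using~\eqref{epsilon}. The only organizational difference is that the paper iterates the full sup-norm bound $|\tilde{\alpha}(n\tau,\cdot)|_\infty$ directly, whereas you first isolate the scalar delay recursion for the boundary trace $\tilde{\alpha}(t,1)$ and then lift back to the state; the underlying mechanism is identical.
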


\begin{proof}
  The mechanisms of the proof use the characteristics
  method and an iteration process. For the sake of simplicity we introduce the notations~$\underline{\lambda}$, $\underline{\mu}$, $K_1$,~$K_2$, and~$\tilde{d}$
  \begin{align}
    \underline{\lambda} & = \min_{x\in [0,1]} \lambda(x),\quad \underline{\mu} = \min_{x \in [0,1]} \mu(x) \\
  K_1(t,x) & = n(t)f_1(x)+d_1(t)f_2(x) \nonumber \\
    & \hphantom{=} + d_2(t)f_3(x)+d_4(t)f_4(x) \label{eq:K_1}
      \end{align}
  \begin{align}
    K_2(t,x) & = n(t)g_1(x)+d_1(t)g_2(x) \nonumber \\
    & \hphantom{=} + d_2(t)g_3(x)+d_4(t)g_4(x) \label{eq:K_2} \\
    \tilde{d}(t) & = d_4(t)-\rho\epsilon n(t) \, . \label{eq:tilde_d}
  \end{align}
  In what follows, for the sake of brevity we write
 ~$\left|K_{1_{[0,t)}}\right|_\infty$ for
 ~$\left|K_1\right|_{L^\infty([0,t)\times(0,1))}$. By the characteristics method we have
  \begin{align}
    \tilde{\alpha}&(\tau,x) % & = \tilde{\alpha}(\tau-\phi_1(x),0) \nonumber \\
    % & \hphantom{=} + \int_0^x \frac{1}{\lambda(\xi)}K_1\left(t-\int_\xi^x \frac{1}{\lambda(\zeta)}d\zeta,\xi\right)d\xi \nonumber \\
    % & = q\tilde{\beta}\left(\tau - \phi_1(x),0\right) + d_3(\tau-\phi_1(x))  \nonumber \\
    % & \hphantom{=} + \int_0^x \frac{1}{\lambda(\xi)}K_1\left(t-\int_\xi^x \frac{1}{\lambda(\zeta)}d\zeta,\xi\right)d\xi \nonumber \\
    % & = q\tilde{\beta}\left(\tau - \phi_1(x) - \phi_2(1), 1\right) + d_3(\tau-\phi_1(x)) \nonumber \\
    % & \hphantom{=} + \int_0^x \frac{1}{\lambda(\xi)}K_1\left(t-\int_\xi^x \frac{1}{\lambda(\zeta)}d\zeta,\xi\right)d\xi \nonumber \\
    % & \hphantom{=} + \int_0^1 \frac{1}{\mu(\xi)}K_2\left(\tau - \phi_1(x)-\int_0^\xi\frac{1}{\mu(\zeta)}d\zeta,\xi\right)d\xi \nonumber \\
    % & = q\rho(1-\epsilon)\tilde{\alpha}(\tau - \phi_1(x) - \phi_2(1),1) \nonumber \\
    % & \hphantom{=} + d_3(\tau-\phi_1(x)) + \tilde{d}(\tau - \phi_1(x) - \phi_2(1))\nonumber \\
    % & \hphantom{=} + \int_0^x \frac{1}{\lambda(\xi)}K_1\left(t-\int_\xi^x \frac{1}{\lambda(\zeta)}d\zeta,\xi\right)d\xi \nonumber \\
    % & \hphantom{=} + \int_0^1 \frac{1}{\mu(\xi)}K_2\left(\tau - \phi_1(x)-\int_0^\xi\frac{1}{\mu(\zeta)}d\zeta,\xi\right)d\xi \nonumber \\
    =  d_3(\tau-\phi_1(x)) + \tilde{d}\left(\tau - \phi_1(x) - \tau_2\right) \nonumber \\
    & \hphantom{=} + q\rho(1-\epsilon) \left(\vphantom{\int_0^{\tau - \phi_1(x) - \phi_2(1)}}\tilde{\alpha}^0(x)+\int_0^{\tau_1 - \phi_1(x)}K_1\left(\xi,w(x,\xi)\right)d\xi \right) \nonumber \\
    & \hphantom{=} + \int_0^x \frac{K_1\left(t-\int_\xi^x \frac{1}{\lambda(\zeta)}d\zeta,\xi\right)}{\lambda(\xi)}d\xi  \nonumber \\
                             & \hphantom{=} + \int_0^1 q\frac{K_2\left(\tau - \phi_1(x)-\phi_2(\xi),\xi\right)}{\mu(\xi)}d\xi  \, ,
  \end{align}
  where $w(x,\xi) = \phi_1^{-1}\left(\phi_1(x)+\xi\right)$. Therefore, one has
  \begin{align}
    \left|\tilde{\alpha}\left(\tau,x\right)\right| & \leq \left|q\rho(1-\epsilon)\right| \left|\tilde{\alpha}^0\right|_\infty  + \left| d_{3_{[0,\tau)}} \right|_\infty + \left|\tilde{d}_{[0,\tau)}\right|_\infty \nonumber \\
    & \hphantom{\leq} +\left(\frac{1}{\underline{\lambda}}+ \left|q\rho(1-\epsilon)\right|\tau\right)\left|K_{1_{[0,\tau)}}\right|_\infty \nonumber \\
    & \hphantom{\leq} + \frac{\left|K_{2_{[0,\tau)}}\right|_\infty}{\underline{\mu}}  \, .
  \end{align}
  Recursively, we get
  \begin{align}
    \left|\tilde{\alpha}\left(n\tau,x\right)\right|_\infty & \leq \left|q\rho(1-\epsilon)\right|^n \left|\tilde{\alpha}^0\right|_\infty \nonumber  \\
                                                               & \hphantom{\leq} +\frac{1}{\underline{\lambda}} \sum_{i=1}^n\left|q\rho(1-\epsilon)\right|^{i-1}\left|K_{1_{[0,n\tau)}}\right|_\infty \nonumber \\
                                                               & \hphantom{\leq} + \tau \sum_{i=1}^n\left|q\rho(1-\epsilon)\right|^i\left|K_{1_{[0,n\tau)}}\right|_\infty \nonumber \\
                                                               & \hphantom{\leq} + \frac{1}{\underline{\mu}}  \sum_{i=1}^n\left|q\rho(1-\epsilon)\right|^{i-1}\left|K_{2_{[0,n\tau)}}\right|_\infty \nonumber \\
                                                               & \hphantom{\leq} + \sum_{i=1}^n \left|q\rho(1-\epsilon)\right|^{i-1}\left| d_{3_{[0,n\tau)}} \right|_\infty \nonumber \\
                                                               & \hphantom{\leq} + \sum_{i=1}^n \left|q\rho(1-\epsilon)\right|^{i-1}  \left|\tilde{d}_{[0,n\tau)}\right|_\infty \, .
  \end{align}
  Using the condition~\eqref{epsilon}, one has~$\left|q\rho(1-\epsilon)\right|<1$ it follows
  \begin{align}
    \left|\tilde{\alpha}\left(n\tau,x\right)\right| & \leq \left|q\rho(1-\epsilon)\right|^n \left\lVert\left(\tilde{\alpha}^0,\tilde{\beta}^0\right)^\top\right\rVert_E \nonumber  \\
                                                               & \hphantom{\leq} + \left(\tau+ \frac{1}{\underline{\lambda}}\right)\frac{\left|K_{1_{[0,n\tau)}}\right|_\infty}{1 - \left|q\rho(1-\epsilon)\right|} \nonumber \\
                                                               & \hphantom{\leq} + \frac{\left|K_{2_[0,n\tau)}\right|_\infty}{\underline{\mu}-\underline{\mu}\left|q\rho(1-\epsilon)\right|}  + \frac{\left| d_{3_{[0,n\tau)}} \right|_\infty}{1 - \left|q\rho(1-\epsilon)\right|} \nonumber \\
    & \hphantom{\leq} + \frac{\left|\tilde{d}_{[0,n\tau)}\right|_\infty}{1 - \left|q\rho(1-\epsilon)\right|} \, .
  \end{align}
  The computation
  showed for~$\tilde{\alpha}$ can be done in a similar way for~$\tilde{\beta}$. We get that for all~$t$ and all~$x$ such
  that~${n\tau \leq t - \phi_1(x) < (n+1)\tau}$
  \begin{align}
    \left|\tilde{\alpha}(t,x)\right| & \leq (1+\left|q\right|)\left|q\rho(1-\epsilon)\right|^n \left\lVert\left(\tilde{\alpha}^0,\tilde{\beta}^0\right)^\top\right\rVert_E \nonumber  \\
                                                               & \hphantom{\leq} +(1+\left|q\right|)\left(\tau+\frac{2}{\underline{\lambda}}\right)\frac{\left|K_{1_{[0,t)}}\right|_\infty}{1 - \left|q\rho(1-\epsilon)\right|} \nonumber \\
                                                               & \hphantom{\leq} +(1+\left|q\right|) \left(\tau+\frac{2}{\underline{\mu}}\right)\frac{\left|K_{2_{[0,t)}}\right|_\infty}{1-\left|q\rho(1-\epsilon)\right|}  \nonumber \\
                                                               & \hphantom{\leq} + (1+\left|q\right|)\frac{2\left| d_{3_{[0,t)}} \right|_\infty}{1 - \left|q\rho(1-\epsilon)\right|} + \left|d_{3_{[0,t)}}\right| \nonumber \\
    & \hphantom{=} + (1+\left|q\right|)\frac{2\left|\tilde{d}_{[0,t)}\right|_\infty}{1 - \left|q\rho(1-\epsilon)\right|} + \left|q\right|\tau \left|K_{2_{[0,t)}}\right|_\infty \nonumber \\
                                     & \hphantom{=} + \left(\tau+\frac{1}{\underline{\lambda}}\right) \left|K_{1_{[0,t)}}\right|_\infty   \, .
  \end{align}
  Finally, with the computations for~$\tilde{\beta}$ we prove
  that~\eqref{eq:ISS_tilde_alpha_tilde_beta} holds with
  \begin{align}
    h_1(X,t) & = Ce^{-\nu t}X \\
    h_2(X) & =
             \left(2\frac{(2+\left|q\right|+\left|\rho(1-\epsilon)\right|)}{1
             - \left|q\rho(1-\epsilon)\right|}\left(\tau+
             \frac{1}{\underline{\lambda}}+\frac{1}{\underline{\mu}}+2\right)\right. \nonumber
    \\
    & \hphantom{=} +2+ \left|q\right|\tau +
      \left|\rho(1-\epsilon)\right|\tau \nonumber \\
    & \hphantom{=} \left. +
      \left(2\tau+\frac{1}{\underline{\lambda}}+\frac{1}{\underline{\mu}}\right)\right)X
      \, ,
  \end{align}
  with $C = (2 + \left|q\right|+\left|\rho(1-\epsilon)\right|)$ and $\nu = \frac{1}{\tau}\ln \left(\frac{1}{q\rho(1-\epsilon)}\right)$.  This concludes the proof of Proposition~\ref{prop:iss_observer}.
  % Finally, we arrive at
  % \begin{align}
  %   \left\lVert\left(\tilde{\alpha},\tilde{\beta}\right)\right\rVert & \leq (1+\left|q\right|+ \left|\rho(1-\epsilon)\right|)\left|q\rho(1-\epsilon)\right|^n \left\lVert\left(\tilde{\alpha}^0,\tilde{\beta}^0\right)^\top\right\rVert_E \nonumber  \\
  %                                    & \hphantom{\leq} +(1+\left|q\right|)\left(\tau+\frac{2}{\underline{\lambda}}\right)\frac{\left|K_1\right|_\infty}{1 - \left|q\rho(1-\epsilon)\right|} \nonumber \\
  %                                    & \hphantom{\leq} +(1+\left|q\right|) \left(\tau+\frac{2}{\underline{\mu}}\right)\frac{\left|K_2\right|_\infty}{1-\left|q\rho(1-\epsilon)\right|}  \nonumber \\
  %                                    & \hphantom{\leq} + (1+\left|q\right|)\frac{2\left| d_{3_{[0,t)}} \right|_\infty}{1 - \left|q\rho(1-\epsilon)\right|} \nonumber \\
  %                                    & \hphantom{=} + (1+\left|q\right|)\frac{2\left|\tilde{d}_{[0,t)}\right|_\infty}{1 - \left|q\rho(1-\epsilon)\right|} + \left|d_{3_{[0,t)}}\right| \nonumber \\
  %                                    & \hphantom{=} + \left(\tau+\frac{1}{\underline{\lambda}}\right) \left|K_{1_{[0,t)}}\right|_\infty  + \left|q\right|\tau \left|K_{2_{[0,t)}}\right|_\infty \, .
  % \end{align}
\end{proof}

\begin{theorem}
  \label{theo:iss_observer}Let us
  assume that~$\rho$,~$q$, and~$\epsilon$ satisfy the condition
  in~\eqref{epsilon}. Then, system~\eqref{tilde_u},~\eqref{tilde_v} with boundary
  conditions~\eqref{tilde_boundary0} and~\eqref{tilde_boundary} is ISS
  with respect to~$n$ and~$d_i$,~$i=1,\dots,4$. More precisely there
  exist a~$\KL$ function~$h_1$ and a~$\K$ function~$h_2$ such that for
  any initial condition~$\left(\tilde{u}^0,\tilde{v}^0\right)^\top \in E'$ the following
  holds
  \begin{align}
    \left\lVert \left(\tilde{u},\tilde{v}\right)^\top
    \right\rVert_{E'} & \leq  h_2\left(\left\lVert \left(n,d_1,\dots,d_4\right)^\top
    \right\rVert_{L^\infty\left((0,t);\R^5\right)}\right) \nonumber \\
    & \hphantom{=} +  h_1\left(\left(\tilde{u}^0,\tilde{v}^0\right)^\top,t\right) \, . \label{eq:ISS_tilde_alpha_tilde_beta2}
  \end{align}

  \begin{proof}
    Using the fact that the backstepping transformation~\eqref{eq_M0},~\eqref{eq_N0} is invertible and Proposition~\ref{prop:iss_observer}, Theorem~\ref{theo:iss_observer} is proved.
  \end{proof}
\end{theorem}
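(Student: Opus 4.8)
The plan is to exploit the fact that the physical error system~\eqref{tilde_u}--\eqref{tilde_boundary} and its backstepping target system~\eqref{alpha_eq_disturbed}--\eqref{beta_bound_disturbed} are linked by the invertible Volterra transformation~\eqref{eq_M0}--\eqref{eq_N0}, together with the ISS estimate~\eqref{eq:ISS_tilde_alpha_tilde_beta} already obtained for the target system in Proposition~\ref{prop:iss_observer}. Since this transformation acts on the spatial variable at each fixed time and is a bounded linear isomorphism of $E'$, the idea is to \emph{pull back} the target-system estimate to the original variables, the only price being the norm distortion introduced by the transformation and its inverse.

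First I would record that both~\eqref{eq_M0}--\eqref{eq_N0} and its inverse act boundedly on $E'$, uniformly in time. Because the kernels $P^{uu},P^{uv},P^{vu},P^{vv}$ and the associated inverse kernels all belong to $L^\infty(\mathcal{T})$ (\cite{VKC11},~\cite{Y60}), each Volterra integral term is dominated in the $L^\infty((0,1);\R)$-norm by a fixed constant times the norm of its argument. Hence there exist constants $C_1,C_2>0$, independent of $t$, of the noise and of the disturbances, such that at every instant
\begin{align*}
  \left\lVert (\tilde u,\tilde v)^\top \right\rVert_{E'} &\le C_1 \left\lVert (\tilde\alpha,\tilde\beta)^\top \right\rVert_{E'}, \\
  \left\lVert (\tilde\alpha^0,\tilde\beta^0)^\top \right\rVert_{E'} &\le C_2 \left\lVert (\tilde u^0,\tilde v^0)^\top \right\rVert_{E'}.
\end{align*}

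Substituting these two bounds into~\eqref{eq:ISS_tilde_alpha_tilde_beta} and using the monotonicity of $h_1$ in its first argument gives, for all $t\ge 0$,
\begin{align*}
  \left\lVert (\tilde u,\tilde v)^\top \right\rVert_{E'} &\le C_1 h_2\left(\left\lVert (n,d_1,\dots,d_4)^\top \right\rVert_{L^\infty((0,t);\R^5)}\right) \\
  & \hphantom{\le} + C_1 h_1\left(C_2 \left\lVert (\tilde u^0,\tilde v^0)^\top \right\rVert_{E'},\, t\right).
\end{align*}
It then remains to note that $s\mapsto C_1 h_2(s)$ is again a $\K$ function and that $(r,t)\mapsto C_1 h_1(C_2 r,t)$ is again a $\KL$ function, since premultiplying by a positive constant and rescaling the first argument by a positive constant preserve both classes. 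Renaming these as the new $h_2$ and $h_1$ yields precisely~\eqref{eq:ISS_tilde_alpha_tilde_beta2}, which proves Theorem~\ref{theo:iss_observer}.

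The single substantive point is the first step: one must guarantee that the inverse Volterra transformation exists and has $L^\infty$ kernels, so that the constant $C_1$ is finite. This is exactly where the well-posedness of the backstepping kernels and the boundedness of the observer gains enter, and it is supplied by the general invertibility theory for Volterra transformations of the second kind (as for the transformation $\Gamma_1,\Gamma_2$ recalled earlier, and established for the present kernels in~\cite{VKC11}). Everything else is the elementary observation that ISS is preserved under a bounded, boundedly-invertible change of state variable, so no further characteristics-based estimate in the spirit of Proposition~\ref{prop:iss_observer} is needed.
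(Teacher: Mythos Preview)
Your proof is correct and follows essentially the same approach as the paper: invoke Proposition~\ref{prop:iss_observer} for the target system and transfer the ISS estimate back to $(\tilde u,\tilde v)$ via the bounded invertibility of the Volterra transformation~\eqref{eq_M0}--\eqref{eq_N0}. The paper's own proof is a one-line statement of exactly this idea; you have simply made explicit the constants $C_1,C_2$ and the preservation of the $\K$/$\KL$ classes under rescaling.
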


%%% Local Variables:
%%% mode: latex
%%% TeX-master: "ACC_I_Theoretical"
%%% End:

\section{Feedback Output Regulation}
\label{sec:feedb-outp-regul}

Consider system~\eqref{eq:perturbed_system_1},~\eqref{eq:perturbed_system_2} with boundary conditions~\eqref{eq:perturbed_system_3} and~\eqref{eq:perturbed_system_4} where $U$ is given by~\eqref{eq:Controller} with $U_{BS}$ given by
\begin{align}
  U_{BS}(t) & = -\tilde{\rho}(1-\epsilon)\hat{u}(t,1)  - \left(\rho-\tilde{\rho}\right)\int_0^1K^{uu}(1,\xi)\hat{u}(t,\xi)d\xi \nonumber \\
       & \hphantom{=} - \left(\rho-\tilde{\rho}\right)\int_0^1K^{uv}(1,\xi)\hat{v}(t,\xi)d\xi-\tilde{\rho}\epsilon y_m(t) \nonumber \\
       & \hphantom{=} + \int_0^1 \left(K^{vu}(1,\xi)\hat{u}(t,\xi)+ K^{vv}(1,\xi)\hat{v}(t,\xi)\right)d\xi \nonumber \\
       & \hphantom{=} -k_I\int_0^1
         l_1(\xi)\Gamma_1[(\hat{u},\hat{v})(t)](\xi)d\xi \nonumber \\
       & \hphantom{=} - k_I \int_0^1 l_2(\xi)\Gamma_2[(\hat{u},\hat{v})(t)](\xi)d\xi  \, , \label{eq:U_output_feedback}
\end{align}
where $\hat{u}$ and $\hat{v}$ are the solution to~\eqref{hat_u}--\eqref{hat_boundary}. The aim of this section is to prove that the
output $y(t)$ of the system is still regulated in the sense of Theorem~\ref{theo:rejection_1} with the control law in~\eqref{eq:U_output_feedback}. We have the second main result of this paper.

\begin{theorem}
  \label{theo:rejection_2}
  Consider system~\eqref{eq:perturbed_system_1},~\eqref{eq:perturbed_system_2} with boundary conditions~\eqref{eq:perturbed_system_3} and~\eqref{eq:perturbed_system_4} where $U$ is given by~\eqref{eq:Controller} with $U_{BS}$ given by~\eqref{eq:U_output_feedback}, $\eta$
  satisfying~\eqref{eq:eta_dot}, and with bounded initial conditions
  $\left(u^0,v^0,\eta^0\right) \in E$. Then, assuming that conditions of Proposition~\ref{prop:ISS_NDE} hold, there exists a positive constant $M$ such that the controlled output $y(t)$ satisfies
  \begin{equation}
    \label{eq:disturbance_bound}
    \left| y(t)\right| \leq M \, .    
  \end{equation}
  Furthermore, if $\partial_t d_1 = \partial_t d_2 = \dot{d}_3 =
  \dot{d}_4 = \dot{d}_5 = 0$, then the output satisfies
  \begin{equation}
    \label{eq:convergence_theo}
    \lim_{t\rightarrow \infty} \left| y(t)\right| = 0\, .
  \end{equation}
\end{theorem}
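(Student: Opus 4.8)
The plan is to combine the two regulation results already established in the paper: the full-state output-regulation result of Theorem~\ref{theo:rejection_1} and the observer ISS estimate of Theorem~\ref{theo:iss_observer}. The key observation is that the output-feedback control law~\eqref{eq:U_output_feedback} is precisely the full-state law~\eqref{eq:U_2} in which the true state $(u,v)$ has been replaced by the observer state $(\hat u,\hat v)$; equivalently, it equals the full-state law applied to $(u,v)$ plus a correction depending only on the observer error $(\tilde u,\tilde v)=(u-\hat u,v-\hat v)$. First I would substitute $\hat u = u-\tilde u$ and $\hat v = v-\tilde v$ into~\eqref{eq:U_output_feedback}, using linearity of the maps $\Gamma_1,\Gamma_2$ and of the boundary integrals, to write $U_{BS}=U_{BS}^{\mathrm{state}}+U_{BS}^{\mathrm{err}}$, where $U_{BS}^{\mathrm{state}}$ is the full-state controller of the previous section and $U_{BS}^{\mathrm{err}}$ is a bounded linear functional of $(\tilde u,\tilde v)$ together with the noise terms $\tilde\rho\epsilon\, n(t)$ arising from $y_m=u(t,1)+n(t)$.

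Next I would feed this decomposition into the closed-loop analysis of Section~\ref{sec:output-regulation}. The point is that $U_{BS}^{\mathrm{err}}$ enters the target-system boundary condition~\eqref{target_BC} exactly like an additional boundary disturbance, so the neutral-delay equation~\eqref{eq:NDE_perturbed} for $\overline{\alpha}(t,1)$ acquires an extra forcing term $K_{\mathrm{err}}(t)$ built from $(\tilde u,\tilde v)(t)$ and $n(t)$. By Theorem~\ref{theo:iss_observer} the observer error is ISS with respect to $n$ and the $d_i$, hence $(\tilde u,\tilde v)$ is bounded in $E'$; consequently $K_{\mathrm{err}}$ is bounded and $K=K_{\mathrm{state}}+K_{\mathrm{err}}$ remains an admissible (essentially bounded) input for the NDE. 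Proposition~\ref{prop:ISS_NDE} then gives boundedness of $\overline{\alpha}(t,1)$, and the characteristic relations~\eqref{eq:alpha_bar_relation_proof_theo_1}--\eqref{eq:beta_bar_relation_proof_theo_1} propagate this to boundedness of $\overline{\alpha}(t,x)$ and $\overline{\beta}(t,x)$ for all $x$, whence~\eqref{eq:lim_u1} yields the bound~\eqref{eq:disturbance_bound} exactly as in the proof of Theorem~\ref{theo:rejection_1}.

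For the convergence statement~\eqref{eq:convergence_theo} under vanishing time-derivatives of the disturbances and noise, I would argue that the hypotheses force both forcing terms to decay: the full-state part gives $\alpha_t^{ss}=\beta_t^{ss}=\dot\eta^{ss}=0$ as in Theorem~\ref{theo:rejection_1}, while the $\KL$ part of the observer estimate~\eqref{eq:ISS_tilde_alpha_tilde_beta2} shows that with constant disturbances and no noise the observer error $(\tilde u,\tilde v)$ converges (the observer being designed to reject static disturbances, as remarked following Theorem~\ref{theo:rejection_1}). Hence $K_{\mathrm{err}}(t)\to 0$, the input to the NDE tends to zero, and the ISS estimate of Proposition~\ref{prop:ISS_NDE} forces $\overline{\alpha}(t,1)\to 0$; propagating through~\eqref{eq:alpha_bar_relation_proof_theo_1}--\eqref{eq:beta_bar_relation_proof_theo_1} and reading off~\eqref{eq:lim_u1} gives $\lim_{t\to\infty}|u(t,1)|=0$.

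The main obstacle I anticipate is making rigorous the claim that the observer-error contribution $U_{BS}^{\mathrm{err}}$ and the noise really act as an \emph{admissible bounded} perturbation of the same NDE, rather than altering its characteristic equation. One must check that replacing $(u,v)$ by $(\hat u,\hat v)$ changes only the right-hand-side forcing $K(t)$ and leaves the delay structure and the coefficients $k_1,k_2$ in~\eqref{eq:NDE_perturbed} untouched, so that the stability conditions of Theorem~\ref{theo:coron_tamasoiu} and the variation-of-constants bound of Proposition~\ref{prop:ISS_NDE} still apply verbatim; this requires careful bookkeeping of how the observer state enters through $\Gamma_1,\Gamma_2$ and the boundary terms. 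A secondary subtlety is the apparent typo $\dot d_5=0$ in the hypothesis of~\eqref{eq:convergence_theo}, which I would read as the condition $n=0$ together with static $d_i$, matching the convergence hypothesis of Theorem~\ref{theo:rejection_1}.
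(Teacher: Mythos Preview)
Your approach is essentially the same as the paper's: substitute $\hat u=u-\tilde u$, $\hat v=v-\tilde v$ into~\eqref{eq:U_output_feedback}, split $U_{BS}$ into the full-state law plus an observer-error term that enters the boundary condition like an additional $d_4$-type disturbance, then invoke the full-state regulation machinery (the paper calls Theorem~\ref{theo:rejection_1} directly rather than re-deriving the NDE, but that is cosmetic). The boundedness part~\eqref{eq:disturbance_bound} is handled correctly.

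One point of care in your convergence argument: your justification that ``the $\KL$ part of~\eqref{eq:ISS_tilde_alpha_tilde_beta2} shows that with constant disturbances and no noise the observer error $(\tilde u,\tilde v)$ converges'' is not quite right, and there is no remark following Theorem~\ref{theo:rejection_1} asserting that the observer rejects static disturbances. The ISS estimate~\eqref{eq:ISS_tilde_alpha_tilde_beta2} only gives a uniform bound when the $d_i$ are nonzero constants; it does not by itself imply convergence of $(\tilde u,\tilde v)$ to zero, and indeed the observer error generically converges to a \emph{nonzero} steady state. The correct statement is that the homogeneous observer-error target system~\eqref{alpha_eq}--\eqref{alpha_bound} is exponentially stable, so with constant forcing in~\eqref{alpha_eq_disturbed}--\eqref{beta_bound_disturbed} the error converges exponentially to a constant profile; hence $U_{BS}^{\mathrm{err}}(t)$ tends to a constant, which the integral action then rejects exactly as a static $d_4$ in Theorem~\ref{theo:rejection_1}. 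The paper's own proof is equally terse on this step (it says $\mathcal D$ ``will vanish''), so your proposal is at the same level of rigor; just be aware that the mechanism is convergence of $(\tilde u,\tilde v)$ to a steady state, not to zero.
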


\begin{proof}
  We have $\hat{u} = \hat{u} - u + u = -\tilde{u} + u$ and ${\hat{v}= \hat{v} - v + v = -\tilde{v} + v}$. Therefore, one has
  \begin{align}
    U_{BS}(t) & = -\tilde{\rho}u(t,1) - \left(\rho-\tilde{\rho}\right)\int_0^1K^{uu}(1,\xi)u(t,\xi)d\xi \nonumber \\
         & \hphantom{=} - \left(\rho-\tilde{\rho}\right)\int_0^1K^{uv}(1,\xi)v(t,\xi)d\xi -\tilde{\rho}\epsilon n(t)\nonumber \\
         & \hphantom{=} + \int_0^1 \left(K^{vu}(1,\xi)u(t,\xi)+ K^{vv}(1,\xi)v(t,\xi)\right)d\xi \nonumber \\
         & \hphantom{=} -k_I\int_0^1 l_1(\xi)\Gamma_1[(u,v)(t)](\xi)d\xi \nonumber \\
         & \hphantom{=} - k_I \int_0^1 l_2(\xi)\Gamma_2[(u,v)(t)](\xi)d\xi  + \D(t) \, . \label{eq:U_output_feedback_2}
  \end{align}
  where $\D(t)$ is given by $U_{BS}$ in~\eqref{eq:U_output_feedback} which $\hat{u}$ and $\hat{v}$ have been replaced by $-\tilde{u}$ and $-\tilde{v}$ respectively. Since, $\tilde{u}$ and $\tilde{v}$ are bounded thanks to Theorem~\ref{theo:iss_observer}, we can consider $\D-\tilde{\rho}\epsilon n$ as a new perturbation in the input and we can apply Theorem~\ref{theo:rejection_1} to conclude that~\eqref{eq:disturbance_bound} holds. Now, if the perturbation vanishes then $\D(t)-\tilde{\rho}\epsilon n(t)$ will vanish in virtue of ISS of the observer system and then again by applying Theorem~\ref{theo:rejection_1} we have~\eqref{eq:convergence_theo}. This concludes the proof of Theorem~\ref{theo:rejection_2}.
\end{proof}

%%% Local Variables:
%%% mode: latex
%%% TeX-master: "ACC_I_Theoretical"
%%% End:

\section{Concluding Remarks}
In this paper we have solved the output feedback regulation problem for a system composed of two linear hyperbolic PDEs with collocated boundary input and output in presence of disturbances and noise in the measurements. This has been done by combining in the control law a backstepping approach with an integral term. By transforming the boundary condition of the resulting target system into a Neutral Differential Equation we have proved that this former system is Input-to-State Stable with respect to disturbances and noise. The proposed controller has finally been combined with a backstepping-based observer to ensure output-feedback stabilization of the output. Both the proposed controller and the observer present some degrees of freedom (necessary to ensure robustness to delays) that enables a trade-off between disturbance and noise sensitivity. The effect of such parameters, in particular regarding the systems sensitivity functions are derived in a companion paper.

%%% Local Variables:
%%% mode: latex
%%% TeX-master: "ACC_I_Theoretical"
%%% End:

\bibliographystyle{plain}
\bibliography{biblio}

\begin{thebibliography}{10}

\bibitem{A13}
O.~M. Aamo.
\newblock Disturbance rejection in $2 \times 2$ linear hyperbolic systems.
\newblock {\em IEEE Transactions on Automatic Control}, 58(5):1095--1106, 2013.

\bibitem{AA15}
H.~Anfinsen and O.~M. Aamo.
\newblock Disturbance rejection in the interior domain of linear $2 \times 2$
  hyperbolic systems.
\newblock {\em IEEE Transactions on Automatic Control}, 60(1):186--191, 2015.

\bibitem{A17}
J.~Auriol, Ulf J.~F. Aarsnes, P.~Martin, and F.~Meglio.
\newblock Delay-robust control design for two heterodirectional linear coupled
  hyperbolic {PDE}s.
\newblock Preprint sumitted to arxiv on September 2017. Available at:
  arXiv:1709.04274.

\bibitem{Bastin2011}
G.~Bastin and J.-M. Coron.
\newblock On boundary feedback stabilization of non-uniform linear $2\times 2$
  hyperbolic systems over a bounded interval.
\newblock {\em Systems \& Control Letters}, 60(11):900--906, 2011.

\bibitem{B00}
A.~Bressan.
\newblock {\em Hyperbolic Systems of Conservation Laws: The One-Dimensional
  Cauchy Problem}, volume~20.
\newblock Oxford University Press on Demand, 2000.

\bibitem{CT15}
J.-M. Coron and S.~O. Tamasoiu.
\newblock Feedback stabilization for a scalar conservation law with {PID}
  boundary control.
\newblock {\em Chinese Annals of Mathematics, Series B}, 36(5):763--776, 2015.

\bibitem{CVKB13}
J.-M. Coron, R.~Vazquez, M.~Krstic, and G.~Bastin.
\newblock Local exponential ${H^2}$ stabilization of a ${2\times 2}$
  quasilinear hyperbolic system using backstepping.
\newblock {\em SIAM Journal on Control and Optimization}, 51(3):2005--2035,
  2013.

\bibitem{D16}
J.~Deutscher.
\newblock Backstepping design of robust state feedback regulators for linear
  $2\times 2$ hyperbolic systems.
\newblock {\em IEEE Transactions on Automatic Control}, 2016.

\bibitem{D17}
J.~Deutscher.
\newblock Finite-time output regulation for linear $2\times 2$ hyperbolic
  systems using backstepping.
\newblock {\em Automatica}, 75:54--62, 2017.

\bibitem{DMBPA14}
F.~Di~Meglio, D.~Bresch-Pietri, and U.~J.~F. Aarsnes.
\newblock An adaptive observer for hyperbolic systems with application to
  underbalanced drilling.
\newblock {\em IFAC Proceedings Volumes}, 47(3):11391--11397, 2014.

\bibitem{DSBCAN08}
V.~Dos~Santos, G.~Bastin, J.-M. Coron, and B.~d'Andr{\'e}a Novel.
\newblock Boundary control with integral action for hyperbolic systems of
  conservation laws: {S}tability and experiments.
\newblock {\em Automatica}, 44(5):1310--1318, 2008.

\bibitem{FHS14}
S.~Fan, M.~Herty, and B.~Seibold.
\newblock Comparative model accuracy of a data-fitted generalized
  {A}w-{R}ascle-{Z}hang model.
\newblock {\em Networks and Heterogeneous Media}, 9:239--268, 2013.

\bibitem{GDL11}
M.~Gugat, M.~Dick, and G.~Leugering.
\newblock Gas flow in fan-shaped networks: {C}lassical solutions and feedback
  stabilization.
\newblock {\em SIAM Journal on Control and Optimization}, 49(5):2101--2117,
  2011.

\bibitem{HL13}
J.~K. Hale and S.~M.~V. Lunel.
\newblock {\em Introduction to Functional Differential Equations}, volume~99.
\newblock Springer Science \& Business Media, 2013.

\bibitem{LBL15}
P.-O. Lamare and N.~Bekiaris-Liberis.
\newblock Control of $2\times 2$ linear hyperbolic systems:
  {B}ackstepping-based trajectory generation and {PI}-based tracking.
\newblock {\em Systems \& Control Letters}, 86:24--33, 2015.

\bibitem{LDM16}
P.-O. Lamare and F.~Di~Meglio.
\newblock Adding an integrator to backstepping: {O}utput disturbances rejection
  for linear hyperbolic systems.
\newblock In {\em American Control Conference (ACC), 2016}, pages 3422--3428,
  Boston, MA, USA, 2016. IEEE.

\bibitem{TK14}
S.~Tang and M.~Krstic.
\newblock Sliding mode control to the stabilization of a linear $2\times 2$
  hyperbolic system with boundary input disturbance.
\newblock In {\em American Control Conference}, pages 1027--1032, Portland, OR,
  USA, 2014. IEEE.

\bibitem{VKC11}
R.~Vazquez, M.~Krstic, and J.-M. Coron.
\newblock Backstepping boundary stabilization and state estimation of a
  $2\times 2$ linear hyperbolic system.
\newblock In {\em Conference on Decision and Control and European Control
  Conference}, pages 4937--4942, Orlando, FL, USA, 2011. IEEE.

\bibitem{Y60}
K.~Yoshida.
\newblock {\em Lectures on Differential and Integral Equations}, volume~10.
\newblock Interscience Publishers, 1960.

\end{thebibliography}


\begin{thebibliography}{1}

\bibitem{CT15}
J.-M. Coron and S.~O. Tamasoiu.
\newblock Feedback stabilization for a scalar conservation law with {PID}
  boundary control.
\newblock {\em Chinese Annals of Mathematics, Series B}, 36(5):763--776, 2015.

\bibitem{HL13}
J.~K. Hale and S.~M.~V. Lunel.
\newblock {\em Introduction to Functional Differential Equations}, volume~99.
\newblock Springer Science \& Business Media, 2013.

\end{thebibliography}

\end{document}